\newcommand{\be}{\begin{equation}}
\newcommand{\ee}{\end{equation}}
\def\x{{\bf x}}
\def\fa{{\bf f}}
\def\I{\mathcal{I}}
\newtheorem{theorem}{\bf Theorem}[section]
\newtheorem{lemma}{\bf Lemma}[section]
\newtheorem{defi}{\bf Definition}[section]
\newtheorem*{claim}{\bf Claim}
\newtheorem{remark}{\bf Remark}[section]
\newtheorem{example}{\bf Example}[section]
\numberwithin{equation}{section}
\newtheorem{thmx}{Theorem}
\newtheorem*{main}{Main Theorem}
\begin{document}

\title{\bf Cartan's Conjecture for Moving Hypersurfaces}
\author{Qiming Yan\footnotemark[1] \and Guangsheng Yu\footnotemark[2]}
\footnotetext[1]{Department of Mathematics,	Tongji University, Shanghai 200092, P. R. China;}
\footnotetext[2]{Department of Mathematics, University of Shanghai for Science and Technology, Shanghai 200093, P. R. China. Corresponding author.}
\footnotetext{Email Address : 
		yan$\underline{\mbox{ }}$qiming@hotmail.com;
		ygsh@usst.edu.cn.}
\footnotetext{Q. Yan was partially supported by NSFC11571256; G. Yu was partially supported by NSFC11671090.}
\date{}
\maketitle

{\bf Abstract:} Let $f$ be a holomorphic curve in $\mathbb{P}^n({\mathbb{C}})$ and let $\mathcal{D}=\{D_1,\ldots,D_q\}$ be a family of moving hypersurfaces
defined by a set of homogeneous polynomials $\mathcal{Q}=\{Q_1,\ldots,Q_q\}$. For $j=1,\ldots,q$, denote by $Q_j=\sum\limits_{i_0+\cdots+i_n=d_j}a_{j,I}(z)x_0^{i_0}\cdots x_n^{i_n}$, where $I=(i_0,\ldots,i_n)\in\mathbb{Z}_{\ge 0}^{n+1}$ and  $a_{j,I}(z)$  are entire functions on ${\mathbb{C}}$ without common zeros.
Let $\mathcal{K}_{\mathcal{Q}}$ be the smallest subfield of meromorphic function field $\mathcal{M}$ which contains ${\mathbb{C}}$ and all $\frac{a_{j,I'}(z)}{a_{j,I''}(z)}$ with $a_{j,I''}(z)\not\equiv 0$, $1\le j\le q$. In previous known second main theorems for $f$ and $\mathcal{D}$, $f$ is usually assumed to be algebraically nondegenerate over $\mathcal{K}_{\mathcal{Q}}$. In this paper, we prove a second main theorem in which $f$ is only assumed to be nonconstant. This result can be regarded as a generalization of Cartan's conjecture for moving hypersurfaces.

{\bf Keywords:} Nevanlinna theory, second main theorem, moving hypersurfaces

{\bf Mathematics Subject Classification(2010):} 30D35, 32H30.

\section{Introduction}

Nevanlinna theory begins with two main theorems (known as
Nevanlinna's first and second main theorems) for meromorphic functions on $\mathbb{C}$, which were established by
R. Nevanlinna in the 1920's (see \cite{1}). The following second main theorem is the heart of Nevanlinna theory. (Here, we use some notations which will be introduced below.)


\begin{thmx}[Nevanlinna's second main theorem]\label{thma}
	Let $f$ be a nonconstant meromorphic function on $\mathbb{C}$. Then,
	for any distinct points $a_1, \dots, a_q\in  \mathbb{C}\cup\{\infty\}$
	and any $\varepsilon>0$,
	$$\|\sum_{j=1}^q m_f(r, a_j) \leq (2+\varepsilon)T_f(r), $$
	where ``$\|$'' means that the inequality holds for all $r$ outside a set with finite
	Lebesgue measure.
\end{thmx}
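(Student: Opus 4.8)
The plan is to run the classical argument built on an auxiliary rational expression in $f$ together with the lemma on the logarithmic derivative. After composing $f$ with a suitable Möbius transformation — which alters $T_f(r)$ and each $m_f(r,\cdot)$ by only $O(1)$ — I may assume every $a_j$ is finite. Put $\delta=\frac12\min_{i\ne j}|a_i-a_j|>0$ and consider
\[
F(z)=\sum_{j=1}^{q}\frac{1}{f(z)-a_j}.
\]
The first step is a pointwise comparison on $|z|=r$: at most one index $k$ can have $|f(z)-a_k|<\delta$, and on that event $|F(z)-(f(z)-a_k)^{-1}|\le (q-1)/\delta$, while off it every summand of $F$ is bounded by $1/\delta$. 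Comparing $\log^{+}$ in the two cases and integrating over the circle yields
\[
\sum_{j=1}^{q} m_f(r,a_j)\le m(r,F)+O(1).
\]

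Next I would bring in the derivative. Writing $F=\frac{1}{f'}\sum_{j=1}^{q}\frac{f'}{f-a_j}$ and using subadditivity of $m(r,\cdot)$,
\[
m(r,F)\le m\!\left(r,\frac1{f'}\right)+\sum_{j=1}^{q} m\!\left(r,\frac{(f-a_j)'}{f-a_j}\right)+O(1).
\]
Here enters the crucial input, the \emph{lemma on the logarithmic derivative}: each term $m\big(r,(f-a_j)'/(f-a_j)\big)=S_f(r)$, where $S_f(r)$ denotes a quantity that is $O(\log^{+}T_f(r)+\log r)$ for $r$ outside a set of finite Lebesgue measure (uniformly in $j$, since $T_{f-a_j}(r)=T_f(r)+O(1)$). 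Thus $m(r,F)\le m(r,1/f')+S_f(r)$. The first main theorem applied to $f'$ gives $m(r,1/f')=T_{f'}(r)-N(r,1/f')+O(1)$; and from $f'=f\cdot(f'/f)$ one gets $m(r,f')\le m(r,f)+S_f(r)$, while $N(r,f')=N(r,f)+\overline N(r,f)$ because a pole of $f$ of order $p$ is a pole of $f'$ of order $p+1$. Assembling,
\[
\sum_{j=1}^{q} m_f(r,a_j)\le T_{f'}(r)-N(r,1/f')+S_f(r)\le T_f(r)+\overline N(r,f)-N(r,1/f')+S_f(r).
\]
Since $\overline N(r,f)\le N(r,f)\le T_f(r)$ and $N(r,1/f')\ge 0$, the right side is $\le 2T_f(r)+S_f(r)$. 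Finally $S_f(r)=o(T_f(r))$ as $r\to\infty$ off a set of finite measure — automatic when $f$ is transcendental, since then $T_f(r)/\log r\to\infty$, and immediate for rational $f$, where $T_f(r)\asymp\log r$ while $S_f(r)=O(1)$ — so $S_f(r)\le\varepsilon T_f(r)$ for $r$ large outside the exceptional set, which is the assertion (the bounded initial segment on which the final estimate might fail has finite measure, consistent with ``$\|$'').

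The main obstacle is exactly the lemma on the logarithmic derivative, the only non-formal step: controlling $\frac1{2\pi}\int_0^{2\pi}\log^{+}\!\big|\tfrac{f'}{f}(re^{i\theta})\big|\,d\theta$ requires Jensen's formula for $f'/f$ combined with a Borel-type growth lemma that lets one absorb the derivative of $T_f$ into the error term, and this is precisely what forces the exceptional set recorded by ``$\|$''. Everything else — the Möbius normalization, the auxiliary-function estimate of Step~1, and the first-main-theorem bookkeeping for $f'$ — is routine; the only additional point worth a word is confirming $S_f(r)=o(T_f(r))$, which is what allows one to replace $2T_f(r)+S_f(r)$ by $(2+\varepsilon)T_f(r)$.
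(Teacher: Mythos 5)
The paper does not prove Theorem~A; it is quoted as classical background with a citation to Nevanlinna's 1929 monograph, so there is no in-paper argument to compare against. Your proposal is the standard Nevanlinna derivation and it is correct: the pointwise estimate for $F=\sum_j(f-a_j)^{-1}$ using the separation $\delta$, the factorization $F=\tfrac1{f'}\sum_j\tfrac{f'}{f-a_j}$, the lemma on the logarithmic derivative applied to each $f'/(f-a_j)$ (legitimate since $T_{f-a_j}=T_f+O(1)$), the first main theorem for $f'$, and the bookkeeping $m(r,f')\le m(r,f)+S_f(r)$ and $N(r,f')=N(r,f)+\overline N(r,f)$ all go through and assemble to $\sum_j m_f(r,a_j)\le 2T_f(r)+\overline N(r,f)-N(r,1/f')+S_f(r)\le 2T_f(r)+S_f(r)$; discarding the (nonnegative) ramification term is exactly the weakening needed to match the paper's statement, and $S_f(r)=o(T_f(r))$ off a set of finite measure — automatic for transcendental $f$ because $T_f(r)/\log r\to\infty$, and trivially for rational $f$ because $f'/f\to 0$ at $\infty$ — delivers the $(2+\varepsilon)$ form. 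Your Möbius normalization to finite targets is harmless for the same $O(1)$ reason you state, and your closing identification of the exceptional set with the union of the Borel set from the log-derivative lemma and a bounded initial segment is the right way to interpret the ``$\|$'' convention. No gap.
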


At the same time, Nevanlinna conjectured that Theorem \ref{thma} should
remain valid if the fixed points $a_j$  were replaced by slowly
growth  meromorphic functions $a_j(z)$, which is the so-called moving targets problem. Osgood \cite{2} proved this conjecture and Steinmetz \cite{3} gave
another simple and elegant proof.

\begin{thmx}[Nevanlinna's conjecture]\label{thmb}
Let $f$ be a nonconstant meromorphic function on $\mathbb{C}$, and let $a_j(z)$, $1\le j\le q$, be slowly growth meromorphic functions with respect to $f$ (i.e., $T_{a_j}(r)=o(T_f(r))$).
Then, for any $\varepsilon>0$,
$$\|\sum_{j=1}^q m_f(r, a_j) \leq (2+\varepsilon)T_f(r). $$
\end{thmx}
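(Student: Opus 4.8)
The plan is to follow Steinmetz's method (in the form later systematized by Ru and Stoll): \emph{absorb the variation of the moving targets into a finite-dimensional space of slowly growing functions}, turning the moving-target inequality into a Cartan-type estimate for \emph{fixed} hyperplanes in a large projective space, and then let the dimension of that auxiliary space grow so that the overhead it introduces washes out. First, the reductions. We may assume $q\ge 3$, since for $q\le 2$ the First Main Theorem already gives $\sum_{j=1}^{q}m_f(r,a_j)\le qT_f(r)+o(T_f(r))\le(2+\varepsilon)T_f(r)$. Write $a_j=\alpha_{j1}/\alpha_{j0}$ with $\alpha_{j0},\alpha_{j1}$ entire and without common zeros, regard $a_j$ as a moving line in $\mathbb{P}^1(\mathbb{C})$, and write $f=[f_0:f_1]$. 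Composing $f$ with a M\"obius transformation having slowly growing coefficients changes $T_f(r)$ and every $m_f(r,a_j)$ by at most $o(T_f(r))$ (by the First Main Theorem) and keeps the targets distinct and slowly growing, so the configuration may be normalized as convenient. The feature that makes dimension one tractable with no nondegeneracy hypothesis is that a \emph{nonconstant} $f$ is automatically transcendental over the field $\mathcal{K}\subseteq\mathcal{M}$ generated over $\mathbb{C}$ by $a_1,\dots,a_q$: a polynomial relation of degree $d$ for $f$ over $\mathcal{K}$ would force $T_f(r)\le d\cdot o(T_f(r))+O(1)$, hence $T_f$ bounded. The breakdown of this implication in higher dimension is exactly what the present paper must overcome.

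\textit{Absorbing the motion.} Fix a large integer $p$ and let $V_p\subseteq\mathcal{M}$ be the $\mathbb{C}$-span of the monomials $a_1^{k_1}\cdots a_q^{k_q}$ with $k_1+\cdots+k_q\le p$; then $t:=\dim_{\mathbb{C}}V_p<\infty$, every element of $V_p$ is slowly growing, and $a_j\in V_1\subseteq V_p$. Fix a basis $b_1,\dots,b_t$ of $V_p$ with $b_1=1$, and let $F_p\colon\mathbb{C}\to\mathbb{P}^{M-1}(\mathbb{C})$, $M=t(p+1)$, be the holomorphic curve with homogeneous coordinates $\{b_i\,f_0^{\,p-s}f_1^{\,s}:1\le i\le t,\ 0\le s\le p\}$ --- the degree-$p$ Veronese image of $[f_0:f_1]$ tensored with $V_p$. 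Transcendence of $f$ over $\mathcal{K}$ makes these coordinates $\mathbb{C}$-linearly independent, so $F_p$ is linearly nondegenerate, and $T_{F_p}(r)=p\,T_f(r)+o(T_f(r))$. Because the coefficients of $(f_1-a_jf_0)^p$ with respect to $f_0^{\,p-s}f_1^{\,s}$, namely $\binom{p}{s}(-a_j)^{\,p-s}$, lie in $V_p$, the form $(f_1-a_jf_0)^p$ is a \emph{fixed} $\mathbb{C}$-linear combination of the coordinates of $F_p$; thus $a_j$ becomes a fixed hyperplane $H_j\subset\mathbb{P}^{M-1}$, and a short computation using $\|F_p\|\asymp\max(|f_0|,|f_1|)^p\cdot\max_i|b_i|$ gives $m_{F_p}(r,H_j)=p\,m_f(r,a_j)+o(T_f(r))$. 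Hence
\[
p\sum_{j=1}^{q}m_f(r,a_j)=\sum_{j=1}^{q}m_{F_p}(r,H_j)+o(T_f(r)).
\]

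\textit{A filtered Cartan estimate and the limit $p\to\infty$.} One now runs the Wronskian argument underlying Cartan's Second Main Theorem for the nondegenerate curve $F_p$ against $H_1,\dots,H_q$. The delicate point is that these hyperplanes are \emph{not} in general position --- their defining forms all share the same top part $f_1^{\,p}$ --- so the naive Cartan bound $M\cdot T_{F_p}(r)$, which grows with $\dim V_p$, is useless. Instead one exploits the natural filtration $W_0\subset W_1\subset\cdots\subset W_p=\mathbb{C}^{M}$ by degree in $f_1/f_0$, where $W_k=\operatorname{span}\{b_i f_0^{\,p-s}f_1^{\,s}:s\le k\}$: this is adapted to the fact that $F_p$ factors through the essentially one-dimensional map $[f_0:f_1]$, so that the ramification is concentrated in the single Wronskian $f_0f_1'-f_0'f_1$ and, on passing to the graded pieces, only the two coordinate directions of $\mathbb{P}^1$ contribute at each level. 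A Steinmetz-type combinatorial lemma on this filtration, together with the Lemma on the Logarithmic Derivative and the First Main Theorem applied to the Wronskian of $F_p$, then yields $\sum_{j}m_{F_p}(r,H_j)\le(2p+c(p))\,T_f(r)+o(T_f(r))$ with $c(p)/p\to0$ as $p\to\infty$. Dividing by $p$, taking $p$ large, and absorbing the error gives $\|\sum_{j=1}^{q}m_f(r,a_j)\le(2+\varepsilon)T_f(r)$.

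\textit{The main obstacle.} The reductions, the logarithmic-derivative estimates, and the dictionary $m_{F_p}\leftrightarrow p\,m_f$, $T_{F_p}\leftrightarrow p\,T_f$ are routine once set up; the genuine difficulty is the previous step --- constructing the filtered (weighted) form of Cartan's argument and proving the combinatorial lemma that forces the effective constant to converge to $2$ rather than to a quantity growing with $\dim V_p$, uniformly over the a priori uncontrolled mutual position of the targets $a_j$. This is the heart of the Osgood--Steinmetz theorem, and its extension to moving \emph{hypersurfaces}, where one must additionally confront genuine algebraic degeneracy, is the subject of this paper.
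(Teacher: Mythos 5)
The paper does not actually prove Theorem B; it is recalled as background and attributed to Osgood~\cite{2} and Steinmetz~\cite{3}, so there is no ``paper's proof'' to compare against. Judged on its own, your outline captures the right strategy and several of its genuinely useful observations: that nonconstancy forces $f$ to be transcendental over the coefficient field $\mathcal{K}$ (via Valiron's estimate, exactly the argument the paper itself uses to show $\ell>0$ in the proof of the Main Theorem), the reduction to $q\ge 3$, the absorption of the moving coefficients into a finite-dimensional slowly-growing auxiliary space $V_p$, and the dictionary $T_{F_p}\sim pT_f$, $m_{F_p}(\cdot,H_j)\sim p\,m_f(\cdot,a_j)$ up to $o(T_f)$. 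These bookkeeping steps check out, and you correctly identify that a naive application of Cartan to $F_p$ gives a bound of order $\dim V_p\cdot(p+1)$ rather than $2$, so something sharper is needed.

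However, the crucial step --- the ``filtered Cartan estimate'' producing $\sum_j m_{F_p}(r,H_j)\le(2p+c(p))T_f(r)+o(T_f(r))$ with $c(p)/p\to0$ --- is asserted but not proved, and you say so yourself. That estimate \emph{is} the Osgood--Steinmetz theorem; the reduction to it is routine, as you note. Without that lemma the proposal is a plan, not a proof. I would also flag that your specific auxiliary curve (the Veronese image of $f$ tensored with a basis of $V_p$) and the filtration $W_k=\operatorname{span}\{b_if_0^{p-s}f_1^s:s\le k\}$ are not literally Steinmetz's construction, and you have not verified that the ramification count and the logarithmic-derivative estimate actually close up on this particular filtration with the asserted constant; the claim that ``only the two coordinate directions of $\mathbb{P}^1$ contribute at each level'' is suggestive but is exactly where the combinatorial work lives. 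Finally, it is worth noting that the broad template you describe --- filtration of a graded space of forms, Hilbert-function asymptotics to control the dimensions, a Cartan-type estimate on a lifted nondegenerate curve, then a limit in the auxiliary degree --- is precisely the strategy the paper executes in Section~3 for its Main Theorem with the Corvaja--Zannier filtration of $W_N$; so your sketch is philosophically aligned with the paper, even though neither the paper nor your proposal supplies a self-contained proof of Theorem~B.
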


During the last few decades, there were several generalizations of Theorem \ref{thmb} for higher dimensional case. To state these results, we first recall some notations and definitions in Nevanlinna theory.

Let $f:{\mathbb{C}}\rightarrow
{\mathbb{P}}^n({\mathbb{C}})$ be a holomorphic map, the characteristic function of $f$ is
defined by
$$
T_f(r)=\int_0^{2\pi}\log\|{\bf{f}}(re^{i\theta})\|\frac{d\theta}{2\pi},
$$
where ${\bf{f}}=(f_0,\ldots,f_n)$ is a reduced representation of $f$ with $f_0,\ldots,f_n$ having no common zeros and $\|{\bf{f}}(z)\|=\max\{|f_0(z)|,\ldots,|f_n(z)|\}$.
In particular, for a meromorphic function $f$ on ${\mathbb{C}}$, we can choose two holomorphic functions
$f_0,f_1$ on ${\mathbb{C}}$ without common zeros such that $f$ can be regarded as a holomorphic map from $\mathbb{C}$ to $\mathbb{P}^1(\mathbb{C})$ with a reduced representation $(f_0,f_1)$, and then we can define the characteristic function of $f$.

Note that a divisor on ${\mathbb{P}}^n({\mathbb{C}})$ is a hypersurface defined by some homogeneous polynomial. Now, we introduce the so-called moving targets on ${\mathbb{P}}^n({\mathbb{C}})$. For a positive integer $d$, we set
$$
\I_d:=\{I=(i_0,\ldots,i_n)\in\mathbb{Z}_{\ge 0}^{n+1}|i_0+\cdots+i_n=d\}
$$
and
$$ n_d=\sharp\I_d=\left(\begin{matrix}d+n\\n\end{matrix}\right).$$
A \textbf{moving hypersurface} $D$ in  $\mathbb{P}^n({\mathbb{C}})$ of degree $d$ is defined by a homogenous polynomial $Q=\sum_{I\in\I_{d}}a_{I}\x^I$,
where $a_I$, $I\in\I_{d}$, are holomorphic functions on ${\mathbb{C}}$ without common zeros, and $\x^I=x_0^{i_0}\cdots x_n^{i_n}$. Then, for each $z\in {\mathbb{C}}$, $D(z)$ is a (fixed) hypersurface in $\mathbb{P}^n({\mathbb{C}})$ defined by the zero set of $Q(z)=\sum_{I\in\I_{d}}a_{I}(z)\x^I$. If $d=1$, $D$ is called a moving hyperplane. Since a moving hypersurface $D$
can be regarded as a holomorphic map $\mathfrak{D}: {\mathbb{C}}\rightarrow\mathbb{P}^{n_d-1}({\mathbb{C}})$ with a reduced representation $(\ldots,a_I(z),\ldots)_{I\in\I_d}$, we call $D$ a {\textbf{slowly moving hypersurface}} with respect to $f$ if $T_{\mathfrak{D}}(r)=o(T_f(r))$.

The proximity function of $f$ with respect to the moving hypersurface $D$ is defined by
$$
m_f(r,D)=\int_0^{2\pi}\log\lambda_D(\fa(re^{i\theta}))\frac{d\theta}{2\pi},
$$
where $\lambda_D(\fa)=\log\frac{\|\fa\|^d\|Q\|}{|Q(\fa)|}$ is the Weil function and $\|Q\|=\max\limits_{I\in\I_d}\{|a_I|\}$. (Sometimes, we also write $\lambda_{D}(\fa)$ as $\lambda_{Q}(\fa)$.)

Let ${\mathcal D}=\{D_1,\ldots, D_q\}$ be a family of moving hypersurfaces in $\mathbb{P}^n({\mathbb{C}})$. We say that $D_1,\ldots, D_q$ are in \textbf{$m$-subgeneral position} ($m\ge n$) if there exists $z\in {\mathbb{C}}$ such that $D_1(z),\ldots, D_q(z)$ are in $m$-subgeneral position (as fixed hypersurfaces), i.e., any $m+1$ of $D_1(z),\ldots, D_q(z)$ do not meet at one point. Actually, if this condition is satisfied for one $z\in {\mathbb{C}}$, it is also satisfied for all $z$ except for a discrete set. We say that $D_1,\ldots, D_q$ are \textbf{in general position} if they are in $n$-subgeneral position.

Let ${\mathcal Q}=\{Q_1,\ldots, Q_q\}$ be the family of homogeneous polynomials with $\deg Q_j=d_j$ defining ${\mathcal D}=\{D_1,\ldots, D_q\}$. Assume that
$$
Q_j=\sum_{I\in\I_{d_j}}a_{j,I}(z)\x^I,\ 1\le j\le q.
$$
We denote by $\mathcal{K}_{\mathcal{Q}}$ the smallest subfield of meromorphic function field $\mathcal{M}$ which contains ${\mathbb{C}}$ and all $\frac{a_{j,I_s}}{a_{j,I_t}}$ with $a_{j,I_t}\not\equiv 0$, $j=1,\ldots,q$, $I_s,I_t\in\I_{d_j}$. We say that $f$ is \textbf{linearly nondegenerate over $\mathcal{K}_{\mathcal{Q}}$} if there is no nonzero linear form $L\in\mathcal{K}_{\mathcal{Q}}[x_0,\ldots,x_n]$ such that $L(f_0,\ldots,f_n)\equiv 0$, and $f$ is \textbf{algebraically nondegenerate over $\mathcal{K}_{\mathcal{Q}}$} if there is no nonzero homogeneous polynomial $Q\in\mathcal{K}_{\mathcal{Q}}[x_0,\ldots,x_n]$ such that $Q(f_0,\ldots,f_n)\equiv 0$.

As a generalization of Theorem \ref{thma}, H. Cartan \cite{4} proved a second main theorem for linearly nondegenerate (over ${\mathbb{C}}$) holomorphic curves intersecting (fixed) hyperplanes in $\mathbb{P}^n({\mathbb{C}})$, and posed a conjecture for nonconstant holomorphic curves which was solved by Nochka (see \cite{5}). For the moving targets case, Ru and Stoll \cite{6} generalized Theorem \ref{thmb} to holomorphic curves intersecting moving hyperplanes in $\mathbb{P}^n({\mathbb{C}})$ as follows.

\begin{thmx}\label{thmc}
	Let $f:{\mathbb{C}}\rightarrow\mathbb{P}^n({\mathbb{C}})$ be a nonconstant holomorphic curve, and let ${\mathcal D}=\{D_1,\ldots, D_q\}$ be a family of slowly moving hyperplanes with respect to $f$ with the set of defining linear forms ${\mathcal Q}=\{Q_1,\ldots, Q_q\}$.

(i) (Cartan-Nochka type theorem) Assume that $f$ is linearly nondegenerate over $\mathcal{K}_{\mathcal{Q}}$ and $D_1,\ldots,D_q$ are located in $m$-subgeneral position. Then, for any $\varepsilon>0$,
$$\|\sum_{j=1}^q m_f(r, D_j) \leq (2n-m+1+\varepsilon)T_f(r). $$

(ii) (Cartan's conjecture for moving hyperplanes) Assume that $D_1,\ldots,D_q$ are located in general position such that $Q_j(\fa)\not\equiv 0$, $j=1,\ldots,q$. Then, for any $\varepsilon>0$,
$$\|\sum_{j=1}^q m_f(r, D_j) \leq (2n+\varepsilon)T_f(r). $$
\end{thmx}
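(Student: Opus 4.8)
The plan is to prove (i) by Steinmetz's device in the form developed by Ru and Stoll \cite{6}: one builds from $f$ an auxiliary holomorphic curve for which the moving hyperplanes become \emph{fixed} hyperplanes, applies the Cartan--Nochka second main theorem \cite{4,5} there, and translates the resulting estimate back to $f$; part (ii) is then deduced from (i) by restricting $f$ to the smallest $\mathcal{K}_{\mathcal{Q}}$-rational linear subspace containing its image. For (i), if $q\le 2m-n+1$ the bound is immediate from the first main theorem, so assume $q>2m-n+1$. Fix a reduced representation $\fa=(f_0,\ldots,f_n)$ and write $Q_j=\sum_{k=0}^na_{jk}x_k$; all the $a_{jk}$, and products of them, have characteristic $o(T_f(r))$. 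First I would pick a finite set $B=\{b_0,\ldots,b_r\}$ of entire functions of characteristic $o(T_f(r))$ for which $\mathcal{K}_{\mathcal{Q}}$ is exactly the field $\mathbb{C}(\{b_i/b_{i'}\})$ of ratios of elements of $B$ --- for instance, take field generators $\rho_1,\ldots,\rho_r$ of $\mathcal{K}_{\mathcal{Q}}$, each a quotient $a_{jk}/a_{jl}$ of two coefficients of a single $Q_j$, choose an entire common denominator $b_0$ for all of them, and set $b_i=b_0\rho_i$ --- so that, after rescaling each $Q_j$ by a factor of characteristic $o(T_f(r))$, every coefficient of $Q_j$ becomes a homogeneous polynomial in $B$ of one fixed degree $\delta$. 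For $N\ge 1$ let $u_N$ be the number of degree-$N$ monomials in $B$ and let $\mathcal{W}_N\subset\mathcal{M}$ be the $\mathbb{C}$-span of $\{\beta f_k:\ \beta\ \text{a degree-}N\ \text{monomial in }B,\ 0\le k\le n\}$. A $\mathbb{C}$-linear relation $\sum c_{\beta,k}\beta f_k\equiv 0$ would produce homogeneous polynomials $P_0,\ldots,P_n$ of equal degree in $B$ with $\sum_kP_kf_k\equiv 0$, and hence, dividing by some $P_k\not\equiv 0$, a nonzero $\mathcal{K}_{\mathcal{Q}}$-linear form vanishing on $f$ --- impossible by hypothesis. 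Therefore $\dim_{\mathbb{C}}\mathcal{W}_N=(n+1)u_N$, and a $\mathbb{C}$-basis of $\mathcal{W}_{N+\delta}$ (say the family $\{\beta'f_k\}$) gives a holomorphic curve $F:\mathbb{C}\to\mathbb{P}^{(n+1)u_{N+\delta}-1}(\mathbb{C})$ that is linearly nondegenerate over $\mathbb{C}$.

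For each $j$ and each degree-$N$ monomial $\beta$, the function $\beta Q_j(\fa)=\sum_k(\beta a_{jk})f_k$ lies in $\mathcal{W}_{N+\delta}$, hence equals $L_{j,\beta}(F)$ for a \emph{fixed} linear form $L_{j,\beta}$ with constant, uniformly bounded coefficients. The crux --- and the step I expect to be the main obstacle --- is a combinatorial argument (the heart of \cite{6}) showing that the $m$-subgeneral position of $D_1,\ldots,D_q$ forces the family $\{L_{j,\beta}\}$ of $qu_N$ fixed forms to lie in $M$-subgeneral position in $\mathbb{P}^{(n+1)u_{N+\delta}-1}(\mathbb{C})$, with $M=M(N)\ge (n+1)u_{N+\delta}-1$ and, crucially, $\bigl(2M-(n+1)u_{N+\delta}+2\bigr)/u_N\to 2m-n+1$ as $N\to\infty$. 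Granting this, the Cartan--Nochka second main theorem \cite{4,5} applied to the nondegenerate $F$ and the forms $\{L_{j,\beta}\}$ gives $\|\sum_{j,\beta}m_F(r,L_{j,\beta})\le\bigl(2M-(n+1)u_{N+\delta}+2+\varepsilon/2\bigr)T_F(r)$. To return to $f$: since $\mathcal{W}_{N+\delta}$ contains $b_0^{N+\delta}f_k$ for every $k$, integrating the estimate $\log\|F\|\ge\log\|\fa\|-o(T_f(r))$ together with the trivial reverse inequality yields $T_F(r)=T_f(r)+o(T_f(r))$ for each fixed $N$; and since $|L_{j,\beta}(F)|$ equals $|\beta|\,|Q_j(\fa)|$ up to a factor of characteristic $o(T_f(r))$ while $\|L_{j,\beta}\|=O(1)$, averaging the Weil functions over the $u_N$ monomials $\beta$ gives $\sum_\beta m_F(r,L_{j,\beta})=u_N\,m_f(r,D_j)+o(T_f(r))$. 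Substituting, dividing by $u_N$ and letting $N\to\infty$, the coefficient on the right tends to $2m-n+1+\varepsilon/2$, whence $\|\sum_{j=1}^qm_f(r,D_j)\le(2m-n+1+\varepsilon)T_f(r)$.

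For (ii), $f$ need no longer be linearly nondegenerate over $\mathcal{K}_{\mathcal{Q}}$. Put $W=\{\c\in\mathcal{K}_{\mathcal{Q}}^{\,n+1}:\ c_0f_0+\cdots+c_nf_n\equiv 0\}$ and $V=W^{\perp}\subset\mathcal{K}_{\mathcal{Q}}^{\,n+1}$, of dimension $k+1$ over $\mathcal{K}_{\mathcal{Q}}$; since $f$ is nonconstant, $1\le k\le n$. Taking a $\mathcal{K}_{\mathcal{Q}}$-basis $\w_0,\ldots,\w_k$ of $V$ and writing $\fa=\sum_{i=0}^kg_i\w_i$ with $g_i\in\mathcal{M}$, the curve $\tilde f=(g_0,\ldots,g_k):\mathbb{C}\to\mathbb{P}^k(\mathbb{C})$ is linearly nondegenerate over $\mathcal{K}_{\mathcal{Q}}$ (a $\mathcal{K}_{\mathcal{Q}}$-relation among the $g_i$ would force a vector of $W=V^{\perp}$ to restrict nontrivially to $V$) and satisfies $T_{\tilde f}(r)=T_f(r)+o(T_f(r))$, since $\fa$ and $\tilde f$ are related linearly over $\mathcal{K}_{\mathcal{Q}}$. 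Fixing for each $j$ a coefficient $a_{j,I_0}$ of $Q_j$ with $a_{j,I_0}\not\equiv 0$ and noting that $Q_j(\w_i)/a_{j,I_0}\in\mathcal{K}_{\mathcal{Q}}$ for all $i$, the restriction $Q_j|_V$ is $a_{j,I_0}$ times a linear form $\hat Q_j\in\mathcal{K}_{\mathcal{Q}}[y_0,\ldots,y_k]$ with $\hat Q_j(\tilde f)=Q_j(\fa)/a_{j,I_0}\not\equiv 0$ and $m_{\tilde f}(r,\hat D_j)=m_f(r,D_j)+o(T_f(r))$; moreover $\hat D_j(z)=D_j(z)\cap\mathbb{P}(V)$, so any $n+1$ of $\hat D_1,\ldots,\hat D_q$ have empty common intersection in $\mathbb{P}^k(\mathbb{C})$, i.e.\ they are in $n$-subgeneral position there. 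Applying part (i) to $\tilde f$ --- with $n$ replaced by $k$ and subgeneral parameter equal to $n$ --- gives $\|\sum_jm_{\tilde f}(r,\hat D_j)\le(2n-k+1+\varepsilon)T_{\tilde f}(r)$, and since $k\ge 1$ the coefficient is at most $2n+\varepsilon$; translating back, $\|\sum_{j=1}^qm_f(r,D_j)\le(2n+\varepsilon)T_f(r)$, as claimed.

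In short, the only genuinely hard part is the Steinmetz machinery of (i): choosing $B$ so that linear nondegeneracy over $\mathcal{K}_{\mathcal{Q}}$ makes the functions $\{\beta f_k\}$ linearly independent over $\mathbb{C}$ (so that $F$ has the expected dimension), pinning down the subgeneral-position parameter $M$ of the fixed forms $\{L_{j,\beta}\}$ precisely enough that $\bigl(2M-(n+1)u_{N+\delta}+2\bigr)/u_N\to 2m-n+1$, and keeping every $o(T_f(r))$-term uniform enough in $N$ to survive division by $u_N$ and the limit $N\to\infty$. Once that is in place, part (ii) and all the first-main-theorem and Weil-function comparisons are routine.
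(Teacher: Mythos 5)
The paper never proves Theorem~C: it is quoted as a known result of Ru and Stoll~\cite{6}, so there is no internal proof to compare yours against. That said, your outline \emph{is} the canonical Ru--Stoll/Steinmetz argument --- build, from the monomials in a generating set $B$ for $\mathcal{K}_{\mathcal{Q}}$, an auxiliary curve $F$ into a higher-dimensional projective space so that the moving hyperplanes become fixed linear forms in $F$; invoke Cartan--Nochka for $F$; and recover the estimate for $f$ by dividing by $u_N$ and letting $N\to\infty$. Your reduction of (ii) to (i), by passing to a basis $\w_0,\ldots,\w_k$ of the annihilator in $\mathcal{K}_{\mathcal{Q}}^{\,n+1}$ of the $\mathcal{K}_{\mathcal{Q}}$-linear relations satisfied by $\fa$ and applying (i) to $\tilde f=(g_0,\ldots,g_k)$ with the $\hat Q_j$ restricted to that subspace, is exactly the standard degenerate-case argument, including the Valiron-type observation that forces $k\geq 1$ when $f$ is nonconstant.

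Two points deserve an explicit flag, though.

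First, you yourself single out as ``the crux'' the assertion that the $qu_N$ fixed forms $L_{j,\beta}$ are in $M$-subgeneral position in $\mathbb{P}^{(n+1)u_{N+\delta}-1}(\mathbb{C})$ with $\bigl(2M-(n+1)u_{N+\delta}+2\bigr)/u_N\to 2m-n+1$, and you offer no argument for it. This is not a routine gap: that combinatorial lemma is essentially the entire content of \cite{6}, and neither the exact position parameter $M(N)$ nor the claimed limit is at all obvious. In particular, even the heuristic ``$M/u_N\to m+1$'' which your asymptotic implicitly requires is not explained, and the nondegeneracy of $F$ (your calculation $\dim_{\mathbb{C}}\mathcal{W}_N=(n+1)u_N$) only rules out collapsing of the ambient space, not overly-strong coincidences among the forms $L_{j,\beta}$. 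Without this lemma the outline does not yet constitute a proof; what you have is a correct reduction scaffold around the one genuinely hard step.

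Second, a discrepancy worth noticing: your derivation in (i) produces the bound $2m-n+1+\varepsilon$, whereas the statement in the paper reads $2n-m+1+\varepsilon$. Your version is the correct one --- the constant must reduce to $n+1$ at $m=n$ and be nondecreasing in $m$, since $m$-subgeneral position is a weaker hypothesis for larger $m$; the paper's expression decreases in $m$, which cannot be right, and it is the constant $2m-n+1$ (not $2n-m+1$) that you need in (ii) when you substitute $m\mapsto n$, $n\mapsto k$ and use $k\geq 1$ to get $2n-k+1\leq 2n$. You should explicitly note that you are proving (and using) $2m-n+1$ rather than silently departing from the stated inequality.
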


In 1992, Eremenko and Sodin \cite{7} gave a generalization of (ii) of Theorem \ref{thmc} for moving hypersurfaces under a stronger assumption $T_{\mathfrak {D}_j}(r)=o(\frac{T_f(r)}{\log^{\tau}T_f(r)})$ with $\tau>1$ (see Theorem 3 in \cite{7}). Recently, Si \cite{8} obtained the following second main theorem for slowly moving hypersurfaces in subgeneral position.

\begin{thmx}\label{thmd}
	Let $f:{\mathbb{C}}\rightarrow\mathbb{P}^n({\mathbb{C}})$ be a nonconstant holomorphic curve. Let ${\mathcal D}=\{D_1,\ldots, D_q\}$ be a family of slowly (with respect to $f$) moving hypersurfaces in $m$-subgeneral position, and let ${\mathcal Q}=\{Q_1,\ldots, Q_q\}$ be the set of defining homogeneous polynomials of ${\mathcal D}$ with $\deg Q_j=d_j$ ($j=1,\ldots,q$). Assume that $f$ is algebraically nondegenerate over $\mathcal{K}_{\mathcal{Q}}$. Then, for any $\varepsilon>0$,
$$\|\sum_{j=1}^q \frac{1}{d_j}m_f(r, D_j) \leq (m-n+1)(n+1+\varepsilon)T_f(r). $$
\end{thmx}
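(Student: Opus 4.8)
The plan is to trade the moving hypersurfaces for moving hyperplanes in a large projective space and then invoke Theorem~\ref{thmc}(i), the bridge being the degree-$u$ Veronese embedding together with the Corvaja--Zannier/Evertse--Ferretti filtration of the space of degree-$u$ forms; the $m$-subgeneral position and the constant $(m-n+1)(n+1)$ will then come out of a lattice-point (Hilbert-weight) count combined with a Nochka-weight argument.

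\textbf{Reductions and the embedding.} Replacing each $Q_j$ by $Q_j^{\,d/d_j}$ with $d=\mathrm{lcm}(d_1,\dots,d_q)$, the first main theorem gives $\frac1d m_f(r,Q_j^{\,d/d_j})=\frac1{d_j}m_f(r,D_j)+O(1)$; the new polynomials remain slowly moving and in $m$-subgeneral position, and the coefficient field only shrinks, so $f$ stays algebraically nondegenerate over it. Hence we may assume $d_1=\cdots=d_q=d$ and must bound $\sum_j m_f(r,D_j)$. Since $\frac1{d_j}m_f(r,D_j)\le T_f(r)+O(1)$ and $2m-n\le(m-n+1)(n+1)$ for $m\ge n$, the claim is automatic when $q\le 2m-n$, so we may also assume $q\ge 2m-n+1$, the range in which Nochka weights exist. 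Now fix a large integer $u$, set $V_u=H^0(\mathbb P^n,\mathcal O(u))$, $N_u=\dim V_u=\binom{u+n}{n}$, and let $F_u:\mathbb C\to\mathbb P^{N_u-1}(\mathbb C)$ be the composition of $f$ with the degree-$u$ Veronese map; then $T_{F_u}(r)=u\,T_f(r)+O(1)$, and since $f$ is algebraically nondegenerate over $\mathcal K_{\mathcal Q}$ the degree-$u$ monomials in $\fa$ are $\mathcal K_{\mathcal Q}$-linearly independent, so $F_u$ is linearly nondegenerate over $\mathcal K_{\mathcal Q}$. For each $j$ and each $k\le\lfloor u/d\rfloor$, the forms $Q_j^{\,k}g$ with $g$ a monomial of degree $u-kd$ are linear forms on $V_u$ whose coefficients are products of the $a_{j,I}$; they define a flag of moving hyperplanes of $F_u$ with coefficient ratios in $\mathcal K_{\mathcal Q}$, hence slowly moving with respect to $F_u$ (their characteristic is $\le\lfloor u/d\rfloor\,T_{\mathfrak D_j}(r)+O(1)=o(T_f(r))$, $u$ being fixed).

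\textbf{The core.} For generic $z$ relabel so that $|Q_1(\fa(z))|/\|\fa(z)\|^d\le\cdots\le|Q_q(\fa(z))|/\|\fa(z)\|^d$. Using the $m$-subgeneral position, one first reduces $\sum_j\lambda_{D_j}(\fa(z))$ up to $O(1)$ to a sum over at most $m$ of the largest Weil functions, then attaches Nochka-type weights $\omega_1,\dots,\omega_q\in(0,1]$ and Nochka constant $\widetilde\omega$ to the family so that the weighted sum dominates, with $\widetilde\omega(n+1)$ supplying the factor $m-n+1$. Next, following Corvaja--Zannier one picks a basis $\psi_1,\dots,\psi_{N_u}$ of $V_u$ adapted to the filtration of $V_u$ by the ideals generated by the $n$ retained polynomials $Q_{j_1},\dots,Q_{j_n}$ (only $n$ of them are needed once the weights have absorbed the subgeneral position), and bounds, by counting lattice points in a simplex, the total vanishing order $\sum_i\sum_l\mathrm{ord}_{Q_{j_l}}\psi_i$: its leading term is $\frac{n\,u\,N_u}{(n+1)d}$, which after dividing by $N_u$ and letting $u\to\infty$ produces the factor $\frac1{(n+1)d}$ responsible for the $n+1$ in the statement. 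Comparing $\prod_i|\psi_i(\fa(z))|$ with the $\lambda_{D_j}(\fa(z))$ pointwise and integrating, one applies Theorem~\ref{thmc}(i) to the curve $F_u$ (linearly nondegenerate over $\mathcal K_{\mathcal Q}$) and the moving hyperplanes constructed above; this bounds the weighted sum $\sum_j\omega_j\,m_f(r,D_j)$ in terms of $T_{F_u}(r)=u\,T_f(r)+O(1)$ modulo an error $\varepsilon T_f(r)$ that also absorbs all moving-target contributions (legitimate because those hyperplanes are slowly moving). Dividing by $N_u$, letting $u\to\infty$, and removing the Nochka weights by the Nochka inequality yields $\sum_j m_f(r,D_j)\le(m-n+1)(n+1+\varepsilon)\,d\,T_f(r)$, which is the assertion.

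\textbf{Main obstacle.} The heart of the matter is making the three ingredients interlock: the adapted basis of $V_u$ must be chosen so that (a) the lattice-point count yields exactly $n+1$ and nothing larger; (b) the induced moving hyperplanes of $F_u$ genuinely have coefficient ratios in $\mathcal K_{\mathcal Q}$ and are slow enough that the moving-target error term in Theorem~\ref{thmc}(i) is $o(T_f(r))$ uniformly as $u\to\infty$; and (c) the Nochka weights, which are defined on $\mathbb P^n$, interact with the filtration on $V_u$ so as to contribute precisely the factor $m-n+1$. An equivalent route, which avoids quoting Theorem~\ref{thmc}(i) and is presumably close to the original argument, is to carry out the moving-target analysis directly by Steinmetz's method, working with the graded $\mathbb C$-algebra generated by the coefficient functions $a_{j,I}$ and its Hilbert function; either way, the delicate uniform control — ensuring that every auxiliary characteristic function introduced is $o(T_f(r))$ and that no constant is lost in the two limits ($u\to\infty$ and the Nochka un-weighting) — is the real work, the remainder being the by-now-standard Cartan--Nochka/Corvaja--Zannier formalism.
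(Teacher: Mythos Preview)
A framing remark first: the paper does not prove Theorem~\ref{thmd} independently --- it is quoted from Si~\cite{8} and invoked as a black box in the algebraically nondegenerate branch of the Main Theorem. That said, the paper's argument for the Main Theorem, read with $\ell=n$, \emph{is} Si's proof, so that is the relevant comparison.

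Your architecture (common degree, large Veronese, Corvaja--Zannier filtration, moving-target Cartan at the end) matches. The gap is the Nochka step, precisely the obstacle you label~(c): it is not a technicality but the crux, and it does not work as stated. Nochka's weights are a combinatorial construction for \emph{hyperplanes} in $m$-subgeneral position; there is no analogue available for hypersurfaces, and even for hyperplanes the Nochka machinery outputs the linear quantity $2m-n+1$, never the product $(m-n+1)(n+1)$, so the sentence ``$\widetilde\omega(n+1)$ supplying the factor $m-n+1$'' cannot be made to balance. Nor do Nochka weights ``reduce to $n$ polynomials in general position''; they produce a weighted sum over all $q$ targets. The way the factor $m-n+1$ actually appears (the paper's Lemma~\ref{lemma3.5} and \eqref{eq2.5-0}) is by \emph{replacing} rather than weighting: after ordering $|\widetilde Q_{1(z)}(\fa)|\le\cdots\le|\widetilde Q_{m(z)}(\fa)|$ one forms generic $\mathbb C$-linear combinations $\widetilde P_1=\widetilde Q_{1(z)}$ and $\widetilde P_t=\sum_{j=2}^{m-n+t}c_{tj}\widetilde Q_{j(z)}$ for $2\le t\le n+1$, chosen so that $\widetilde P_1,\dots,\widetilde P_{n+1}$ have no common zero. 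The ordering then gives $\lambda_{D_{(m-n+t)(z)}}\le\lambda_{\widetilde P_t}+O(1)$ for $2\le t\le n$, while each of the remaining $m-n$ middle terms is bounded by $\lambda_{\widetilde P_1}$; this yields $\sum_{j}\lambda_{D_j}\le(m-n+1)\sum_{t=1}^{n}\lambda_{\widetilde P_t}+O(1)$ pointwise. One now runs the filtration with respect to $\widetilde P_1,\dots,\widetilde P_n$ --- which are in general position by construction, so no weighting device is needed --- and closes with the elementary moving Cartan inequality (the paper's \eqref{eq2.11}, i.e.\ Theorem~A4.2.1 of~\cite{13}), not Theorem~\ref{thmc}(i).
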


\begin{remark}\label{remark1.1}
	When $m=n$, Theorem \ref{thmd} is obtained by Dethloff and Tran \cite{9}. Actually, under the assumption that those moving hypersurfaces are in general position, Dethloff and Tran \cite{9-1} obtained a more general second main theorem as follows.
\end{remark}

\begin{thmx}\label{thme}
	Let $f:{\mathbb{C}}\rightarrow V\subset\mathbb{P}^M({\mathbb{C}})$ be a nonconstant holomorphic curve, where $V$ is an irreducible algebraic subvariety of dimension $n$. Let ${\mathcal D}=\{D_1,\ldots, D_q\}$ be a family of slowly (with respect to $f$) moving hypersurfaces in $\mathbb{P}^M({\mathbb{C}})$ which are in general position on $V$ (i.e., there exists $z\in \mathbb{C}$ such that $D_1(z),\ldots, D_q(z)$ are in general position on $V$). Let ${\mathcal Q}=\{Q_1,\ldots, Q_q\}$ be the set of defining homogeneous polynomials of ${\mathcal D}$ with $\deg Q_j=d_j$. Assume that $f$ is algebraically nondegenerate over $\mathcal{K}_{\mathcal{Q}}$. Then, for any $\varepsilon>0$,
$$\|\sum_{j=1}^q \frac{1}{d_j}m_f(r, D_j) \leq (n+1+\varepsilon)T_f(r). $$
\end{thmx}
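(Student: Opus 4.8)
The plan is to carry out Ru's filtration method over the function field $\mathcal{K}_{\mathcal{Q}}$, reducing the assertion --- after a Veronese-type embedding adapted to $V$ --- to the second main theorem for slowly moving hyperplanes contained in Theorem~\ref{thmc}(i). Two preliminary reductions are routine. First, replacing each $Q_j$ by $Q_j^{d/d_j}$, where $d=\mathrm{lcm}(d_1,\dots,d_q)$, reduces us to the case $d_1=\cdots=d_q=:d$; this changes neither $\frac{1}{d_j}m_f(r,D_j)=\frac{1}{d}m_f(r,D_j)$, nor general position on $V$, nor the slowly-moving hypothesis, and it only shrinks $\mathcal{K}_{\mathcal{Q}}$, so $f$ stays algebraically nondegenerate over it. Second, for every $z$ outside a fixed discrete set the hypersurfaces $D_1(z),\dots,D_q(z)$ are in general position on $V$; ordering them at such a $z$ so that $\lambda_{Q_1}(\fa(z))\ge\cdots\ge\lambda_{Q_q}(\fa(z))$, the hypothesis that no $n+1=\dim V+1$ of them meet on $V$ forces $\lambda_{Q_j}(\fa(z))=O(1)$ (an $o(T_f(r))$-term once integrated) for $j\ge n+1$, whence
\[
\sum_{j=1}^{q}\lambda_{Q_j}(\fa(z))\ \le\ \sum_{j\in R(z)}\lambda_{Q_j}(\fa(z))+o(T_f)+O(1),
\]
where $R(z)$ is the set of the $n+1$ hypersurfaces closest to $\fa(z)$.

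Now fix a large integer $N$ with $d\mid N$. Put $R_V=\mathbb{C}[x_0,\dots,x_M]/I(V)$, $H_N=\dim(R_V)_N$, and let $\phi_N\colon\mathbb{C}\to\mathbb{P}^{H_N-1}(\mathbb{C})$ be the curve with homogeneous coordinates $(\dots,e_\alpha(\fa),\dots)$ for a fixed $\mathbb{C}$-basis $\{e_\alpha\}$ of $(R_V)_N$. Since each $x_i^N$ represents an element of $(R_V)_N$, one has $\|\fa\|^N\asymp\|\phi_N\|$, hence $T_{\phi_N}(r)=N\,T_f(r)+O(1)$; and $\phi_N$ is linearly nondegenerate over $\mathcal{K}_{\mathcal{Q}}$ precisely because $f$ is algebraically nondegenerate over $\mathcal{K}_{\mathcal{Q}}$ (relative to $V$). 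For each ordered $(n+1)$-tuple $R=(i_0,\dots,i_n)$ of indices whose hypersurfaces have no common zero on $V$, let $I_R\subset R_V$ be the ideal they generate and filter $(R_V)_N$ by $(R_V)_N\supseteq(I_R)_N\supseteq(I_R^2)_N\supseteq\cdots$; refining to a full flag, choose a basis $\mathcal{B}_R$ of $(R_V)_N$ adapted to this flag whose members have the form $\psi=Q_{i_0}^{a_0}\cdots Q_{i_n}^{a_n}\,g_\psi$ (writing $a_k=a_k(\psi)$) with $g_\psi$ a fixed monomial, arranged so that the total exponent sums $w_k:=\sum_{\psi\in\mathcal{B}_R}a_k(\psi)$ are non-increasing in $k$. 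The crucial ingredient is the uniform (in $R$) Hilbert-function estimate
\[
\sum_{k=0}^{n}w_k\ =\ \sum_{t\ge1}\dim(I_R^{t})_N\ \ge\ \Big(\tfrac{1}{d}-\epsilon(N)\Big)\,N\,H_N,\qquad\epsilon(N)\longrightarrow0\quad(N\to\infty),
\]
which follows because, $Q_{i_0},\dots,Q_{i_n}$ having no common zero on the $n$-dimensional $V$, the finite-dimensional graded ring $R_V/I_R^{t}$ vanishes in all degrees $\ge dt+c_0$ with $c_0$ independent of $t$ (linearity of the Castelnuovo--Mumford regularity of the powers $I_R^{t}$), so that $\dim(I_R^{t})_N=H_N$ for all but $O(1)$ of the indices $t\le N/d$.

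To conclude, view each $\psi\in\mathcal{B}_R$ as a slowly moving hyperplane $\mathfrak{H}_\psi$ for $\phi_N$ --- its normalized coefficients lie in $\mathcal{K}_{\mathcal{Q}}$ and have characteristic function $o(T_f)=o(T_{\phi_N})$ --- and observe, from $\psi(\fa)=\prod_k Q_{i_k}(\fa)^{a_k}g_\psi(\fa)$ and $\|\fa\|^N\asymp\|\phi_N\|$, that
\[
\lambda_{\mathfrak{H}_\psi}(\phi_N(z))\ \ge\ \sum_{k=0}^{n}a_k(\psi)\,\lambda_{Q_{i_k}}(\fa(z))-O(1),
\]
the residual term being a nonnegative proximity-type quantity up to $O(1)$. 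Summing over $\psi\in\mathcal{B}_R$ and invoking Chebyshev's sum inequality --- when $R(z)=R$, both $(w_k)_k$ and $(\lambda_{Q_{i_k}}(\fa(z)))_k$ are non-increasing --- gives, for such $z$,
\[
\sum_{j\in R(z)}\lambda_{Q_j}(\fa(z))\ \le\ \frac{n+1}{\sum_k w_k}\,\Big(\sum_{\psi\in\mathcal{B}_{R(z)}}\lambda_{\mathfrak{H}_\psi}(\phi_N(z))+O(H_N)\Big).
\]
The hyperplanes arising from the finitely many $\mathcal{B}_R$ form a fixed finite family of slowly moving hyperplanes for the linearly nondegenerate curve $\phi_N$, so the general form of the second main theorem for slowly moving hyperplanes underlying Theorem~\ref{thmc}(i) (the Ru--Stoll estimate, with the maximum over linearly independent sub-families taken inside the integral) yields $\|\int_0^{2\pi}\max_{R}\sum_{\psi\in\mathcal{B}_R}\lambda_{\mathfrak{H}_\psi}(\phi_N)\,\frac{d\theta}{2\pi}\le(H_N+\varepsilon_0)\,T_{\phi_N}(r)$. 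Combining the three displays, integrating over $S(r)$, and dividing by $\sum_k w_k\sim NH_N/d$, we arrive at $\|\sum_{j}m_f(r,D_j)\le d(n+1)\cdot\frac{H_N+\varepsilon_0}{H_N}\cdot(1+o_N(1))\,T_f(r)+o(T_f)+O(1)$; since $\frac{1}{d_j}m_f(r,D_j)=\frac{1}{d}m_f(r,D_j)$, taking $N$ large and then $\varepsilon_0$ small against the prescribed $\varepsilon$ finishes the proof. I expect the main obstacle to be the uniform Hilbert-function estimate above --- equivalently, the uniform-in-$t$ vanishing of the top-degree parts of $R_V/I_R^{t}$ --- together with the bookkeeping needed to pass from the fixed-target Cartan-type theorem to its moving, ``maximum-inside-the-integral'' form and to verify that all the auxiliary hyperplanes $\mathfrak{H}_\psi$ are genuinely slowly moving over $\mathcal{K}_{\mathcal{Q}}$.
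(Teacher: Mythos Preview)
The paper does not itself prove Theorem~E; it is quoted from Dethloff--Tran. What the paper does supply, in the proof of the Main Theorem, is the Dethloff--Tran filtration machinery (Definition~3.1 through Lemma~3.9 and equation~(3.17)), so the natural comparison is against that.

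Your overall strategy is correct and is essentially Ru's method adapted to moving targets, which is what Dethloff--Tran do. Two differences from the paper's treatment deserve comment. First, you filter $(R_V)_N$ by the powers $I_R\supset I_R^{2}\supset\cdots$ and then apply Chebyshev's sum inequality to pass from $\sum_k w_k\lambda_{Q_{i_k}}$ to $\sum_k\lambda_{Q_{i_k}}$. The paper instead uses the finer lexicographic Corvaja--Zannier filtration $W_{\mathbf i}=\sum_{\mathbf e\ge\mathbf i}\widetilde P^{\mathbf e}\cdot\mathcal K_{\mathcal Q}[x]_{N-d\|\mathbf e\|}$ indexed by $\mathbf i\in\mathbb Z_{\ge0}^{\ell}$ (Definition~3.1); its built-in symmetry gives $\sum_{\mathbf i}\Delta_N^{\mathbf i}i_j$ independent of $j$ (equation~(3.17)), so all weights are equal and no Chebyshev step is needed. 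Your Chebyshev step requires choosing $\mathcal B_R$ so that the $w_k$ are non-increasing in the order already dictated by $R(z)$; this can be arranged because the power filtration is symmetric in the $n+1$ generators, but you do not justify it, and the lexicographic route avoids the issue entirely.

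Second, your key estimate $\sum_{t\ge1}\dim(I_R^{t})_N\ge(\tfrac1d-\epsilon(N))NH_N$ via linear growth of $\mathrm{reg}(I_R^{t})$ is a genuinely different technical input from the paper's approach. The paper computes the stabilised quotient dimensions $\Delta^{\mathbf i}$ directly (Lemmas~3.7--3.9) and shows they all equal $\Delta d^{\ell}$, whence the same asymptotic $\sim NH_N/((\ell+1)d)$. For \emph{moving} hypersurfaces either route must be carried out over $\mathcal K_{\mathcal Q}$ or reduced to $\mathbb C$ at a generic $a\in\mathbb C$; the paper does the latter via Lemma~3.4 (comparing $\dim_{\mathcal K_{\mathcal Q}}$ and $\dim_{\mathbb C}$ at generic $a$) and the Claim in the proof of Lemma~3.3. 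Your sketch treats $R_V$ as a ring over $\mathbb C$ but the ideals $I_R$ live over $\mathcal K_{\mathcal Q}$, so exactly this bookkeeping is needed, and you are right to flag it as the main obstacle.
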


Note that the condition ``$f$ is algebraically nondegenerate over $\mathcal{K}_{\mathcal{Q}}$" in Theorems \ref{thmd} and \ref{thme} is difficult to check. The purpose of this paper is to give a second main theorem as a generalization of Cartan's conjecture for moving hypersurfaces, in which we only assume that $f$ is nonconstant and the moving hypersurfaces are in subgeneral position.

\begin{main}\label{mainthm}
	Let $f$ be a nonconstant holomorphic curve in $\mathbb{P}^n({\mathbb{C}})$ ($n>1$). Let ${\mathcal D}=\{D_1,\ldots, D_q\}$ be a family of slowly (with respect to $f$) moving hypersurfaces in $m$-subgeneral position, and let ${\mathcal Q}=\{Q_1,\ldots, Q_q\}$ be the set of defining homogeneous polynomials of ${\mathcal D}$ with $\deg Q_j=d_j$ and $Q_j(\fa)\not\equiv 0$ for $j=1,\ldots,q$. Then, for any $\varepsilon>0$,
\begin{eqnarray}
\|\sum_{j=1}^q \frac{1}{d_j}m_f(r, D_j) \leq ((m-\min\{n,m/2\}+1)(\min\{n,m/2\}+1)+\varepsilon)T_f(r).
\label{eq1.1}
\end{eqnarray}
\end{main}

\section{Preliminaries on algebraic geometry over general fields}\label{algg}

In classical algebraic geometry setting, the field is always $\mathbb{C}$. However, in order to prove our main theorem, we need the algebraic geometry over an arbitrary base field and the coordinates of points are taken over an arbitrary extension field of the base field. In this section, we list some basic facts of the theory of algebraic geometry over general fields. For more details, please refer to \cite{We46}, \cite[Chp 16]{10-3}  and \cite[Chp VII]{ZS60}.

Let $k$ be a field and let $K$ be an algebraically closed extension of $k$. We call $k$ a base field and $K$ the coordinate domain. In this paper, we take $K$ to be the universal field $\Omega$ over the base field $k$, which means that $\Omega$ is algebraically closed and has infinite transcendence degree over $k$. A useful fact of $\Omega$ is that any field extension obtained by adjoining finitely many field elements to $k$ can be isomorphically imbedded in $\Omega$ which leaves the elements of $k$ fixed.

The points of the $n$-dimensional projective space $\mathbb{P}^n(\Omega)$ (over $\Omega$) are represented by ordered $(n+1)$-tuples $(y_0,\ldots,y_n)$ of elements of $\Omega$ with the $(n+1)$-tuple $(0,\ldots,0)$ being excluded and two $(n+1)$-tuples $(y_0,\ldots,y_n)$ and $(y'_0,\ldots,y'_n)$ representing the same point $p$ if and only if there exists an element $t\neq 0$ in $\Omega$ such that $y'_i=ty_i,i=0,\ldots,n$. The $(n+1)$-tuple $(y_0,\ldots,y_n)$ is called a set of homogeneous coordinates of the corresponding point.

Let $F(x_0,\ldots,x_n)$ be a homogeneous polynomial over $k$ and $p$ be a point of $\mathbb{P}^n(\Omega)$. If there is a set of homogeneous coordinates $(y_0,\ldots,y_n)$ of $p$ satisfies $F(y_0,\ldots,y_n)$, then every set of homogeneous coordinates $(y'_0,\ldots,y'_n)$ of $p$ will satisfy $F(y'_0,\ldots,y'_n)=0$. We say that the point $p$ is a zero of $F$ and that $F$ vanishes at $p$.

If $I$ is a homogeneous ideal in $k[x_0,\ldots,x_n]$, any common zero of $F$ belonging to $I$ is called a zero of the ideal $I$, and the set of zeros of $I$ is called a variety of $I$ and is denoted by $V(I)$. A projective algebraic variety $V$ in $\mathbb{P}^n(\Omega)$, defined over $k$, is any subset of $\mathbb{P}^n(\Omega)$ which is the variety of some homogeneous ideal in $k[x_0,\ldots,x_n]$.

If $E$ is any subset of $\mathbb{P}^n(\Omega)$, then the set of homogeneous polynomials in $k[x_0,\ldots,x_n]$ which vanish at every point of $E$ is obviously the set of homogeneous polynomials belonging to a well-defined homogeneous ideal. This homogeneous ideal is called the ideal of the set $E$ and will be denoted by $I(E)$.

We have a natural topology on the projective space $\mathbb{P}^n(\Omega)$ in which the projective algebraic varieties are the closed sets.

The closure of any subset $E$ of $\mathbb{P}^n(\Omega)$, i.e., the smallest variety containing $E$, is given by $V(I(E))$.

\begin{example}
	Let $\mathcal{K}_\mathcal{Q}$ be the base field and $\Omega$ be the universal field over $\mathcal{K}_\mathcal{Q}$. Let $f:\mathbb{C}\rightarrow\mathbb{P}^n(\mathbb{C})$ be a holomorphic map. A reduced representation $(f_0,\ldots,f_n)$ of $f$ can be regarded as a set of homogeneous coordinates of some point $p$ in $\mathbb{P}^n(\Omega)$. Now we can give an algebraic geometry explanation of algebraically nondegenerate condition over $\mathcal{K}_\mathcal{Q}$, that is, $f$ is algebraically nondegenerate over $\mathcal{K}_\mathcal{Q}$ means that the closure of $p$ is $\mathbb{P}^n(\Omega)$.
	
\end{example}

Next, we introduce the definition of dimension of a projective algebraic variety in $\mathbb{P}^n(\Omega)$. Let $V$ be a nonempty irreducible variety in $\mathbb{P}^n(\Omega)$, i.e., the homogeneous ideal of $V$ is a prime ideal in $k[x_0,\ldots,x_n]$. Denote by $I$ this homogeneous ideal. We say that $p$ is a generic point of $I$ if $F\in I$ implies $F$ vanishes at $p$ and conversely. If $p$ is a generic point and $(y_0,\ldots,y_n)$ is a set of homogeneous coordinates of $p$, denote by $k(p)$ the field generated over $k$ by all the ratios $\frac{y_i}{y_j}$ such that $y_j\neq 0$, which is independent of the choice of the set of homogeneous coordinates of $p$. The dimension of an irreducible variety $V\subset\mathbb{P}^n(\Omega)$ is an integer between $0$ and $n$, and is defined by the transcendence degree of the generic point with respect to its homogeneous prime ideal, i.e., $\dim V:=$the transcendence degree of $k(p)/k$. Since every variety can be uniquely decomposed as a finite sum of irreducible varieties, the dimension of an arbitrary variety is defined to be the highest of the dimensions of its irreducible components.

\begin{remark}\label{remark2.1}
	We remark that this definition of dimension agrees with other more frequently used definitions, e.g., the supremum of all integers $m$ such that there exists a chain $Z_0\subset Z_1\subset\ldots\subset Z_m$ of distinct irreducible closed subsets of $V$.
\end{remark}

Let $I\subset k[x_0,\ldots,x_n]$ be a homogeneous ideal. The Hilbert function of $I$ is
\begin{eqnarray*}
h_I&:&\mathbb{Z}\rightarrow\mathbb{Z} \\
&&N\mapsto\dim_k\left(k[x_0,\ldots,x_n]_N/I_N\right),
\end{eqnarray*}where $k[x_0,\ldots,x_n]_N$ is the vector space of homogeneous polynomials of degree $N$ and $I_N:=I\cap k[x_0,\ldots,x_n]_N$. An important fact of Hilbert function is that there exist a polynomial $P_I\in\mathbb{Q}[t]$ of degree $d$ and a positive integer $N_0$ such that
\begin{equation*}
h_I(N)=P_I(N)\quad\text{for}\ N\geq N_0.
\end{equation*}
This polynomial $P_I:=a_dt^d+\cdots+a_1t+a_0$ is called the Hilbert polynomial of $I$. Let $V\subset\mathbb{P}^n(\Omega)$ be the projective variety of $I$, then $d=\dim V$. (For the proof, see \cite[Theorems 41 and 42 of chapter VII]{ZS60}.) Define the degree of $V$ by $\deg V:= a_d\cdot d!\in \mathbb{N}$.

Here we recall the relations between affine and projective varieties. Denote by $\mathbb{A}^n(\Omega)$ and $\mathbb{P}^n(\Omega)$ the affine space and projective space. There is a natural map $\varphi:\mathbb{A}^n(\Omega)\rightarrow\mathbb{P}^n(\Omega)$ which maps a point $p\in\mathbb{A}^n(\Omega)$ with coordinates $(y_1,\ldots,y_n)$ to a point $\varphi(p)\in\mathbb{P}^n(\Omega)$ with homogeneous coordinates $(1,y_1,\ldots,y_n)$. This map $\varphi$ identifies the affine space $\mathbb{A}^n(\Omega)$ with the complement of the hyperplane at infinity $\{x_0=0\}$ in the projective space $\mathbb{P}^n(\Omega)$. The connection between projective and affine varieties is as follows: If $V$ is a variety (not contained in the hyperplane at infinity) in $\mathbb{P}^n(\Omega)$, then the intersection of $V$ with $\mathbb{A}^n(\Omega)$ is an affine variety with the same dimension.

\section{Proof of the Main Theorem}

Replacing $Q_j$ by $Q_j^{d/d_j}$ if necessary, where $d$ is the least common multiple of $d_j$'s, we can assume that $Q_1,\ldots,Q_q$ have the same degree $d$. Set
$$
Q_j(z)=\sum_{I\in\I_{d}}a_{j,I}(z)\x^I,\ j=1,\ldots,q.
$$
For each $j$, there exists  $a_{j,I^j}(z)$, one of the coefficients in $Q_j(z)$, such that $a_{j,I^j}(z)\not\equiv0$. We fix this $a_{j,I^j}$, then set $\widetilde{a}_{j,I}(z)=\frac{a_{j,I}(z)}{a_{j,I^j}(z)}$ and
$$
\widetilde{Q}_j(z)=\sum_{I\in\I_d}\widetilde{a}_{j,I}(z)\x^I,
$$
which is a homogeneous polynomial in $\mathcal{K}_{\mathcal{Q}}[x_0,\ldots,x_n]$. By definition, we have
$$
\lambda_{D_j}(\fa)=\log\frac{\|\fa\|^d\|Q_j\|}{|Q_j(\fa)|}=\log\frac{\|\fa\|^d\|\widetilde{Q}_j\|}{|\widetilde{Q}_j(\fa)|}\ \mbox{for}\ j=1,\ldots,q.
$$

Denote by $\mathcal{C}_\mathcal{Q}$ the set of all nonnegative functions $h:{\mathbb{C}}\rightarrow [0,+\infty]\subset \overline{\mathbb{R}}$, which are of the form
$$
\frac{|g_{1}|+\cdots+|g_s|}{|g_{s+1}|+\cdots+|g_t|},
$$
where $s,t\in {\mathbb{N}}$, $g_1,\ldots,g_t\in \mathcal{K}_{\mathcal{Q}}\setminus\{0\}$. It is easy to see that the sums, products and quotients of functions in $\mathcal{C}_\mathcal{Q}$ are again in $\mathcal{C}_\mathcal{Q}$. Obviously, for any $h\in \mathcal{C}_\mathcal{Q}$, we have
$$
\int_0^{2\pi}\log h(re^{i\theta})\frac{d\theta}{2\pi}=o(T_f(r)).
$$

Since ${\mathcal D}$ is a family of moving hypersurfaces which are in $m$-subgeneral position, we have the following lemma.

\begin{lemma}\label{lemma3.1}
	For any $D_{j_1},\ldots,D_{j_{m+1}}\in {\mathcal D}$, there exist functions $h_1,h_2\in \mathcal{C}_\mathcal{Q}$ such that
	$$
	h_2\|\fa\|^d\le \max_{l\in\{1,\ldots,m+1\}}|\widetilde{Q}_{j_l}(f_0,\ldots,f_n)|\le h_1\|\fa\|^d.
	$$
\end{lemma}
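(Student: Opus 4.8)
The plan is to prove the two inequalities separately; the upper one is elementary, while the lower one is where the $m$-subgeneral position hypothesis enters, through Hilbert's Nullstellensatz transferred from $\mathbb{C}$ to the field $\mathcal{K}_\mathcal{Q}$. For the upper bound, since $\widetilde Q_{j_l}=\sum_{I\in\mathcal{I}_d}\widetilde a_{j_l,I}\x^I$ we have $|\widetilde Q_{j_l}(f_0,\ldots,f_n)|\le\bigl(\sum_{I\in\mathcal{I}_d}|\widetilde a_{j_l,I}|\bigr)\|\fa\|^d$ for every $l$, so it suffices to take $h_1:=\sum_{l=1}^{m+1}\sum_{I}|\widetilde a_{j_l,I}|$, the inner sum running over those $I$ with $\widetilde a_{j_l,I}\not\equiv0$; this $h_1$ lies in $\mathcal{C}_\mathcal{Q}$ because each surviving $\widetilde a_{j_l,I}\in\mathcal{K}_\mathcal{Q}\setminus\{0\}$, and $h_1\not\equiv0$ since $\widetilde a_{j_l,I^{j_l}}\equiv1$.

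For the lower bound the crux is the following algebraic fact (with $j_1,\ldots,j_{m+1}$ distinct): there exist an integer $N\ge d$ and homogeneous polynomials $b_{i,l}\in\mathcal{K}_\mathcal{Q}[x_0,\ldots,x_n]$ of degree $N-d$ ($0\le i\le n$, $1\le l\le m+1$) with
\begin{equation*}
x_i^N=\sum_{l=1}^{m+1}b_{i,l}(x)\,\widetilde Q_{j_l}(x),\qquad i=0,\ldots,n;
\end{equation*}
equivalently, $\widetilde Q_{j_1},\ldots,\widetilde Q_{j_{m+1}}$ have no common zero in $\mathbb{P}^n(\Omega)$. To establish this I would use the definition of $m$-subgeneral position, together with the remark that it holds for all $z$ outside a discrete set: there is a discrete $S\subset\mathbb{C}$, which I enlarge to also contain the zeros of $a_{j_1,I^{j_1}},\ldots,a_{j_{m+1},I^{j_{m+1}}}$, such that for every $z_0\notin S$ the forms $\widetilde Q_{j_1}(z_0),\ldots,\widetilde Q_{j_{m+1}}(z_0)\in\mathbb{C}[x_0,\ldots,x_n]$ are well defined of degree $d$ and have no common zero in $\mathbb{P}^n(\mathbb{C})$. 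Fixing such a $z_0$, Hilbert's Nullstellensatz gives $N\ge d$ with $(x_0,\ldots,x_n)^N\subseteq(\widetilde Q_{j_1}(z_0),\ldots,\widetilde Q_{j_{m+1}}(z_0))$; in the monomial basis of degree-$N$ forms this means the matrix $M(z_0)$ whose rows are the coordinate vectors of the products $\x^J\widetilde Q_{j_l}(z_0)$ ($|J|=N-d$, $1\le l\le m+1$) has full column rank, so it contains a nonsingular maximal square submatrix. The determinant of that submatrix, formed now with $z$ in place of $z_0$, is a polynomial with integer coefficients in the functions $\widetilde a_{j_l,I}$, hence an element $\delta\in\mathcal{K}_\mathcal{Q}$ with $\delta(z_0)\ne0$; thus $\delta\not\equiv0$ and $\delta$ is a unit of the field $\mathcal{K}_\mathcal{Q}$. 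Therefore the chosen products $\x^J\widetilde Q_{j_l}$ span the space of degree-$N$ forms over $\mathcal{K}_\mathcal{Q}$, so each $x_i^N$ is a $\mathcal{K}_\mathcal{Q}$-linear combination of them, which gives the $b_{i,l}$.

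To conclude, one substitutes $x=(f_0(z),\ldots,f_n(z))$ into these identities. Writing $b_{i,l}=\sum_{|J|=N-d}c_{i,l,J}\x^J$ with $c_{i,l,J}\in\mathcal{K}_\mathcal{Q}$, one gets, for each $i$, $|f_i(z)|^N\le\bigl(\sum_{l,J}|c_{i,l,J}(z)|\bigr)\|\fa(z)\|^{N-d}\max_{1\le l\le m+1}|\widetilde Q_{j_l}(\fa(z))|$; taking the maximum over $i$, using that $\|\fa(z)\|>0$ for all $z$ (as $\fa$ is a reduced representation), and cancelling $\|\fa(z)\|^{N-d}$ yields $\|\fa(z)\|^d\le\gamma(z)\max_l|\widetilde Q_{j_l}(\fa(z))|$ with $\gamma:=\sum_{i,l,J}|c_{i,l,J}|\in\mathcal{C}_\mathcal{Q}$ (and $\gamma\not\equiv0$, since the $b_{0,l}$ do not all vanish). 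Then $h_2:=1/\gamma\in\mathcal{C}_\mathcal{Q}$ works. I expect the only genuinely delicate point to be the transfer in the middle paragraph: passing from a rank condition that holds at a single complex point $z_0$ to the same statement over the function field $\mathcal{K}_\mathcal{Q}$, the justification being the elementary fact that a meromorphic function not vanishing identically is a unit of $\mathcal{M}$, and hence of $\mathcal{K}_\mathcal{Q}$. The remaining steps are routine estimates and linear algebra.
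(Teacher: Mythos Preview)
Your proof is correct. The upper bound and the final estimate mirror the paper exactly. For the lower bound, the paper takes a different technical route: it invokes the theory of \emph{inertia forms} (due to Zariski) to produce a single $\widetilde R\in\mathcal{K}_\mathcal{Q}\setminus\{0\}$ with $x_i^s\widetilde R\in(\widetilde Q_{j_1},\ldots,\widetilde Q_{j_{m+1}})$ for all $i$, the nonvanishing of $\widetilde R$ coming from the fact that an inertia form is nonzero at a specialization precisely when the specialized forms have no common zero. Your argument reaches the equivalent identity $x_i^N=\sum_l b_{i,l}\widetilde Q_{j_l}$ by a more elementary path: apply the Nullstellensatz at a single good point $z_0$, encode membership as a rank condition on the matrix of products $\x^J\widetilde Q_{j_l}$, and transfer full rank from $\mathbb{C}$ to $\mathcal{K}_\mathcal{Q}$ via a single nonvanishing minor. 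This avoids citing elimination theory and is entirely self-contained; the inertia-form approach is shorter to state once the machinery is granted and makes the dependence on the coefficients more explicit (useful if one later wants uniformity over the choice of $j_1,\ldots,j_{m+1}$), but for the present lemma your linear-algebra transfer is equally valid and arguably cleaner.
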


\begin{proof}[{\bf Proof of Lemma \ref{lemma3.1}}] The second inequality is elementary. Note that
$$
|\widetilde{Q}_{j_l}(f_0,\ldots,f_n)|=\left|\sum_{I\in\I_d}\widetilde{a}_{j_l,I}\fa^I\right|\le \sum_{I\in\I_d}|\widetilde{a}_{j_l,I}|\cdot\|\fa\|^d\ \mbox{for}\ l=1,\ldots,m+1.
$$
Set
$$
h_1:=\sum_{l=1}^{m+1}\sum_{I\in\I_d}|\widetilde{a}_{j_l,I}|.
$$
So we have
$$
\max_{l\in\{1,\ldots,m+1\}}|\widetilde{Q}_{j_l}(f_0,\ldots,f_n)|\le h_1\|\fa\|^d.
$$

For the proof of the first inequality, one can refer to page 10 of \cite{8}. Here, we introduce another proof given in \cite{9-1}.
To do so, we need some results on inertia forms in \cite{10}.

Set
$$
\widetilde{\widetilde{Q}}_{j_l}=\sum_{I\in\I_d}t_{j_l,I}\x^I\in {\mathbb{C}}[{\bf{t}},\x]\ \mbox{with}\ {\bf{t}}=(\ldots,t_{j_l,I},\ldots)\ \mbox{for}\ l=1,\ldots,m+1.
$$
Obviously, $\widetilde{Q}_{j_l}(z)(x_0,\ldots,x_n)=\widetilde{\widetilde{Q}}_{j_l}(\ldots,\widetilde{a}_{j_l,I}(z),\ldots,x_0,\ldots,x_n)$, $l=1,\ldots,m+1$. Let $(\widetilde{\widetilde{Q}}_{j_1},\ldots,\widetilde{\widetilde{Q}}_{j_{m+1}})_{{\mathbb{C}}[{\bf{t}}]}$ be the ideal in the ring of polynomials in $x_0,\ldots,x_n$ with coefficients in ${\mathbb{C}}[{\bf{t}}]$ generated by $\widetilde{\widetilde{Q}}_{j_1},\ldots,\widetilde{\widetilde{Q}}_{j_{m+1}}$. A polynomial $\widetilde{\widetilde{R}}$ in ${\mathbb{C}}[{\bf{t}}]$ is called an inertia form of the polynomials $\widetilde{\widetilde{Q}}_{j_1},\ldots,\widetilde{\widetilde{Q}}_{j_{m+1}}$ if it has the property:
\begin{eqnarray}
x_i^s\cdot\widetilde{\widetilde{R}}\in (\widetilde{\widetilde{Q}}_{j_1},\ldots,\widetilde{\widetilde{Q}}_{j_{m+1}})_{{\mathbb{C}}[{\bf{t}}]} \label{eq2.1}
\end{eqnarray}
for $i=0,\ldots,n$ and for some nonnegative integer $s$. It follows from the definition that the inertia forms of $\widetilde{\widetilde{Q}}_{j_1},\ldots,\widetilde{\widetilde{Q}}_{j_{m+1}}$ form an ideal in ${\mathbb{C}}[{\bf{t}}]$. There exists an inertia form $\widetilde{\widetilde{R}}$ with $\widetilde{\widetilde{R}}(\ldots,t^0_{j,I},\ldots)\neq 0$ if and only if $\widetilde{\widetilde{Q}}_{j_l}(\ldots,t^0_{j_l,I},\ldots,x_0,\ldots,x_n)$, $l=1,\ldots,m+1$, have no common nontrivial solutions in $x_0,\ldots,x_n$ (for special values $t^0_{j_l,I}$).

Take $z_0\in {\mathbb{C}}$ such that all coefficient functions of $\widetilde{Q}_{j_l}(z)$, $l=1,\ldots,m+1$, are holomorphic at $z_0$ and the system of equations
$$
\widetilde{Q}_{j_l}(z_0)(x_0,\ldots,x_n)=0,\ l=1,\ldots,m+1,
$$
have no common nontrivial solutions. Set $t^0_{j_l,I}=\widetilde{a}_{j_l,I}(z_0)$ and the inertia form $\widetilde{\widetilde{R}}$ with $\widetilde{\widetilde{R}}(\ldots,\widetilde{a}_{j_l,I}(z_0),\ldots)\neq 0$. Denote by $\widetilde{R}(z):=\widetilde{\widetilde{R}}(\ldots,\widetilde{a}_{j_l,I},\ldots)\in \mathcal{K}_{\mathcal{Q}}$, then $\widetilde{R}(z)\not\equiv 0$ by $\widetilde{R}(z_0)=\widetilde{\widetilde{R}}(\ldots,\widetilde{a}_{j_l,I}(z_0),\ldots)\neq 0$. From (\ref{eq2.1}), we know that
$$
x_i^s\cdot{\widetilde{R}}\in ({\widetilde{Q}}_{j_1},\ldots,{\widetilde{Q}}_{j_{m+1}})_{\mathcal{K}_{\mathcal{Q}}}\ \mbox{for}\ i=0,\ldots,n,
$$
which implies
\begin{eqnarray}
x_i^s\cdot{\widetilde{R}}=\sum_{l=1}^{m+1}b_{il} \widetilde{Q}_{j_l}, \label{eq2.2}
\end{eqnarray}
where
\begin{eqnarray}
b_{il}=\sum_{I\in\I_{s-d}}\gamma_{il,I}\x^I \label{eq2.3}
\end{eqnarray}
with $\gamma_{il,I}\in \mathcal{K}_{\mathcal{Q}}$.

Now, we continue the proof of the first inequality.

By (\ref{eq2.2}) and (\ref{eq2.3}), we have
\begin{eqnarray*}
|f_i^s\cdot{\widetilde{R}}|&=&\left|\sum_{l=1}^{m+1}\left(\sum_{I\in\I_{s-d}}\gamma_{il,I}\fa^I\right) \widetilde{Q}_{j_l}(f_0,\ldots,f_n)\right|\\
&\le&\sum_{1\le l\le m+1,I\in\I_{s-d}}|\gamma_{il,I}|\cdot\|\fa\|^{s-d}\cdot \max_{l\in\{1,\ldots,m+1\}}|\widetilde{Q}_{j_l}(f_0,\ldots,f_n)|,\ i=0,\ldots,n,
\end{eqnarray*}
i.e.,
\begin{eqnarray*}
\frac{|f_i^s|}{\|\fa\|^{s-d}}\le \sum_{1\le l\le m+1,I\in\I_{s-d}}\left|\frac{\gamma_{il,I}}{{\widetilde{R}}}\right|\cdot\max_{l\in\{1,\ldots,m+1\}}|\widetilde{Q}_{j_l}(f_0,\ldots,f_n)|\ \mbox{for\ all}\ i=0,\ldots,n.
\end{eqnarray*}
Take $i$ such that $\|\fa\|=|f_i|$ and set $h_2=\frac{1}{\sum\limits_{i=0}^n\sum\limits_{1\le l\le m+1,I\in\I_{s-d}}\left|\frac{\gamma_{il,I}}{{\widetilde{R}}}\right|}$, we obtain
$$
h_2\|\fa\|^d\le \max_{l\in\{1,\ldots,m+1\}}|\widetilde{Q}_{j_l}(f_0,\ldots,f_n)|.
$$
Lemma \ref{lemma3.1} is thus proved.
\end{proof}

For each given $z\in \mathbb{C}$ (excluding all zeros and poles of $\widetilde{Q}_j(\fa)$), there exists a renumbering
$\{1(z),\ldots,q(z)\}$ of the indices $\{1,\ldots,q\}$ such that
$$
|\widetilde{Q}_{1(z)}(\fa)(z)|\le|\widetilde{Q}_{2(z)}(\fa)(z)|\le\cdots\le|\widetilde{Q}_{q(z)}(\fa)(z)|.
$$
By Lemma \ref{lemma3.1}, we have $\max\limits_{j\in\{1,\ldots,m+1\}}|\widetilde{Q}_{j(z)}(\fa)(z)|=|\widetilde{Q}_{m+1(z)}(\fa)(z)|\ge h\|\fa(z)\|^d$ for some $h\in \mathcal{C}_\mathcal{Q}$, i.e.,
$$
\frac{\|\fa(z)\|^d}{|\widetilde{Q}_{m+1(z)}(\fa)(z)|}\le \frac{1}{h}.
$$
Hence,
$$
\prod_{j=1}^q\frac{\|\fa(z)\|^d}{|\widetilde{Q}_{j}(\fa)(z)|}\le \frac{1}{h^{q-m}}\prod_{j=1}^m\frac{\|\fa(z)\|^d}{|\widetilde{Q}_{j(z)}(\fa)(z)|}
$$
and
\begin{eqnarray}
\sum_{j=1}^q\lambda_{D_j}(\fa(z))\le \sum_{j=1}^m\lambda_{D_{j(z)}}(\fa(z))+\log h'\ \mbox{with}\ h'\in \mathcal{C}_\mathcal{Q}.
\label{eq2.4}
\end{eqnarray}

If $f$ is algebraically nondegenerate over $\mathcal{K}_{\mathcal{Q}}$, then (\ref{eq1.1}) follows directly from Theorem \ref{thmd}.

If $f$ is algebraically degenerate over $\mathcal{K}_{\mathcal{Q}}$, there is a nonzero homogeneous polynomial $P\in\mathcal{K}_{\mathcal{Q}}[x_0,\ldots,x_n]$ such that $P(f_0,\ldots,f_n)\equiv 0$. We can construct a homogeneous ideal $I_{\mathcal{K}_{\mathcal{Q}}}\subset\mathcal{K}_{\mathcal{Q}}[x_0,\ldots,x_n]$ generated by all homogeneous polynomials $P\in\mathcal{K}_{\mathcal{Q}}[x_0,\ldots,x_n]$ such that $P(f_0,\ldots,f_n)\equiv 0$. Obviously, for all $P\in I_{\mathcal{K}_{\mathcal{Q}}}$, we have $P(f_0,\ldots,f_n)\equiv 0$. Since $\mathcal{K}_{\mathcal{Q}}[x_0,\ldots,x_n]$ is a Noetherian ring, $I_{\mathcal{K}_{\mathcal{Q}}}$ is finitely generated. Assume that $I_{\mathcal{K}_{\mathcal{Q}}}$ is generated by homogeneous polynomials $P_1,\ldots,P_s$. We now use the language of the algebraic geometry on an arbitrary base field and the coordinates of the points are taken over the universal field over the base field. Let ${\mathcal{K}_{\mathcal{Q}}}$ be the base field and $\Omega$ be the universal field over ${\mathcal{K}_{\mathcal{Q}}}$. Consider the projective variety $V\subset\mathbb{P}^n(\Omega)$ (with the base field ${\mathcal{K}_{\mathcal{Q}}}$) constructed by $I_{\mathcal{K}_\mathcal{Q}}$. Let $\deg V=\Delta$ and $\dim V=\ell$ where $0\le \ell\le n$. We first show that $\ell>0$. Otherwise, if we assume that  $f_0\not\equiv 0$, then $(\frac{f_1}{f_0},\ldots,\frac{f_n}{f_0})$ lies in a zero-dimensional affine variety in the affine space $\mathbb{A}^n(\Omega)$. It follows that each coordinate $\frac{f_j}{f_0}$ is algebraic over ${\mathcal{K}_{\mathcal{Q}}}$ (see 16.4 in \cite{10-3}), i.e., $\frac{f_j}{f_0}$ satisfies some algebraic equation $A_t\xi^t+A_{t-1}\xi^{t-1}+\cdots+A_0\equiv 0$ with $A_0,\ldots,A_t\in {\mathcal{K}_{\mathcal{Q}}}$. By Valiron's estimate (see \cite{10-4}), $T_{\frac{f_j}{f_0}}(r)\le \sum\limits_{v=0}^tT_{A_v}(r)+O(1)=o(T_f(r))$, which is a contradiction. Hence, $\ell>0$. Note that $f$ has the following property: there is no homogeneous polynomial $P\in\mathcal{K}_{\mathcal{Q}}[x_0,\ldots,x_n]\setminus I_{\mathcal{K}_{\mathcal{Q}}}$ such that $P(f_0,\ldots,f_n)\equiv 0$.

For a positive integer $N$, let $\mathcal{K}_{\mathcal{Q}}[x_0,\ldots,x_n]_N$ be
the vector space of homogeneous polynomials of degree $N$, and let $I_{\mathcal{K}_{\mathcal{Q}},N}:=I_{\mathcal{K}_{\mathcal{Q}}}\cap\mathcal{K}_{\mathcal{Q}}[x_0,\ldots,x_n]_N$. Denote by $W_N:=\mathcal{K}_{\mathcal{Q}}[x_0,\ldots,x_n]_N/I_{\mathcal{K}_{\mathcal{Q}},N}$. For any $g\in \mathcal{K}_{\mathcal{Q}}[x_0,\ldots,x_n]_N$, let $[g]$ be the projection of $g$ in $W_N$. We refine the basic fact of Hilbert polynomials listed in section \ref{algg} as the following lemma.

\begin{lemma}\label{lemma3.2}
	There exists a positive integer
$N_0$ such that
$$M:=
	\dim_{\mathcal{K}_{\mathcal{Q}}}W_N=\frac{\Delta N^\ell}{\ell!}+O(N^{\ell-1})$$
is a polynomial of $N$ for
$N\geq N_0$.
\end{lemma}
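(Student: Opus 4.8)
Proof proposal for Lemma \ref{lemma3.2}.

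The plan is to observe that $M$ is, by construction, nothing but the value at $N$ of the Hilbert function of the ideal $I_{\mathcal{K}_{\mathcal{Q}}}$, and then to extract its leading term from the general facts about Hilbert polynomials recalled in Section \ref{algg}. Indeed, by the definitions $W_N=\mathcal{K}_{\mathcal{Q}}[x_0,\ldots,x_n]_N/I_{\mathcal{K}_{\mathcal{Q}},N}$ with $I_{\mathcal{K}_{\mathcal{Q}},N}=I_{\mathcal{K}_{\mathcal{Q}}}\cap\mathcal{K}_{\mathcal{Q}}[x_0,\ldots,x_n]_N$, so that
$$
M=\dim_{\mathcal{K}_{\mathcal{Q}}}W_N=h_{I_{\mathcal{K}_{\mathcal{Q}}}}(N),
$$
where $h_{I_{\mathcal{K}_{\mathcal{Q}}}}$ is the Hilbert function of $I_{\mathcal{K}_{\mathcal{Q}}}$ in the sense of Section \ref{algg}, taken with base field $\mathcal{K}_{\mathcal{Q}}$.

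Next I would invoke the fact recalled there (Theorems 41 and 42 of Chapter VII of \cite{ZS60}): there exist a positive integer $N_0$ and a polynomial $P_{I_{\mathcal{K}_{\mathcal{Q}}}}(t)=a_\ell t^\ell+\cdots+a_0\in\mathbb{Q}[t]$ with $h_{I_{\mathcal{K}_{\mathcal{Q}}}}(N)=P_{I_{\mathcal{K}_{\mathcal{Q}}}}(N)$ for all $N\ge N_0$, and whose degree equals $\dim V(I_{\mathcal{K}_{\mathcal{Q}}})=\ell$. By the definition of the degree of a projective variety given in Section \ref{algg}, the leading coefficient satisfies $a_\ell\cdot\ell!=\deg V=\Delta$, i.e. $a_\ell=\Delta/\ell!$. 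Substituting, for $N\ge N_0$,
$$
M=P_{I_{\mathcal{K}_{\mathcal{Q}}}}(N)=\frac{\Delta}{\ell!}N^\ell+a_{\ell-1}N^{\ell-1}+\cdots+a_0=\frac{\Delta N^\ell}{\ell!}+O(N^{\ell-1}),
$$
which is the assertion of the lemma; here one also uses $\ell\ge 1$, established just above in the text, so that the error term is genuinely of lower order.

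The only point that requires care is ensuring that the invariants $\Delta$ and $\ell$ appearing in the statement are exactly the ones governing the Hilbert polynomial of $I_{\mathcal{K}_{\mathcal{Q}}}$; this is automatic because $V$, $\dim V=\ell$ and $\deg V=\Delta$ were introduced above directly from the ideal $I_{\mathcal{K}_{\mathcal{Q}}}$ itself (not from its radical), so the identities $\deg P_{I_{\mathcal{K}_{\mathcal{Q}}}}=\dim V$ and $a_\ell\cdot\ell!=\deg V$ hold by definition together with the cited theorems. A secondary point is that $\mathcal{K}_{\mathcal{Q}}$ is not algebraically closed, but this is harmless: the function $N\mapsto\dim_{\mathcal{K}_{\mathcal{Q}}}\!\left(\mathcal{K}_{\mathcal{Q}}[x_0,\ldots,x_n]_N/I_{\mathcal{K}_{\mathcal{Q}},N}\right)$ is unchanged under any field extension (extension of scalars is exact and preserves dimensions of finite-dimensional vector spaces), so one may base change to $\Omega$ and apply the classical statement, or simply note that \cite{ZS60} already works over an arbitrary field. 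Thus Lemma \ref{lemma3.2} is a bookkeeping refinement of the material in Section \ref{algg}, and I do not expect any serious obstacle beyond keeping these conventions straight.
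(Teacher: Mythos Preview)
Your proposal is correct and takes exactly the same approach as the paper: the paper does not give a separate proof of Lemma \ref{lemma3.2} but presents it as a refinement of the Hilbert polynomial facts collected in Section \ref{algg}, which is precisely what you have spelled out. Your additional remarks on the definitions of $\Delta$, $\ell$ and on base change are accurate and consistent with the paper's conventions.
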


Let $a$ be an arbitrary point in $\mathbb{C}$ such that all coefficients of $P_1,\ldots,P_s$ are holomorphic at $a$. Denote by $I(a)$ the homogeneous ideal in $\mathbb{C}[x_0,\ldots,x_n]$ generated by $P_1(a),\ldots,P_s(a)$. Let $V(a)$ be the variety in $\mathbb{P}^n(\mathbb{C})$ defined by $I(a)$. Then we have the following fact.

\begin{lemma}\label{lemma3.3}
	$\dim V(a)=\ell$ for all $a\in\mathbb{C}$ excluding a discrete subset.
\end{lemma}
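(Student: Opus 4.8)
The plan is to establish the two inequalities $\dim V(a)\ge\ell$ and $\dim V(a)\le\ell$ separately, in each case for all $a$ outside a discrete subset of $\mathbb{C}$, and then take the union of the two exceptional sets. The guiding idea is that $V$ and $V(a)$ are ``the same variety'' read over $\Omega$ and over $\mathbb{C}$ respectively, via the specialization $z\mapsto a$, so the dimension ought to be preserved; the delicate point is to obtain a single discrete exceptional set, rather than one depending on an auxiliary parameter (a degree $N$, or a choice of linear forms), since a countable union of discrete sets need not be discrete.

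For the inequality $\dim V(a)\ge\ell$, I would fix $N\ge\max_j\deg P_j$ and consider the surjective $\mathcal{K}_{\mathcal{Q}}$-linear map $\bigoplus_{j}\mathcal{K}_{\mathcal{Q}}[x_0,\ldots,x_n]_{N-\deg P_j}\to I_{\mathcal{K}_{\mathcal{Q}},N}$, $(g_j)_j\mapsto\sum_j g_jP_j$. In the monomial bases this is represented by a matrix $B_N$ whose entries lie among the (finitely many, $N$-independent) coefficient functions of $P_1,\ldots,P_s$, so $\dim_{\mathcal{K}_{\mathcal{Q}}}I_{\mathcal{K}_{\mathcal{Q}},N}=\operatorname{rank}B_N$. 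Letting $S_1$ be the discrete set of poles of those coefficient functions, for $a\notin S_1$ all entries of $B_N$ are holomorphic at $a$; since every minor of $B_N$ of size exceeding $\operatorname{rank}B_N$ is an element of $\mathcal{K}_{\mathcal{Q}}$ that vanishes identically, it vanishes at $a$ as well, whence $\operatorname{rank}B_N(a)\le\operatorname{rank}B_N$, i.e. $\dim_{\mathbb{C}}I(a)_N\le\dim_{\mathcal{K}_{\mathcal{Q}}}I_{\mathcal{K}_{\mathcal{Q}},N}$, and so $h_{I(a)}(N)\ge h_{I_{\mathcal{K}_{\mathcal{Q}}}}(N)$. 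This holds for all $N\ge\max_j\deg P_j$, and by Lemma \ref{lemma3.2} the right-hand side equals $\frac{\Delta}{\ell!}N^{\ell}+O(N^{\ell-1})$ for large $N$ with $\Delta\ge 1$, while $h_{I(a)}(N)$ eventually coincides with the Hilbert polynomial of $I(a)$, which has degree $\dim V(a)$ and positive leading coefficient; comparing growth forces $\dim V(a)\ge\ell$ (in particular $V(a)\neq\emptyset$) for all $a\notin S_1$.

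For the inequality $\dim V(a)\le\ell$, I would use that $\dim V=\ell$ to produce, by iterated generic hyperplane sections, linear forms $L_1,\ldots,L_{\ell+1}\in\mathcal{K}_{\mathcal{Q}}[x_0,\ldots,x_n]$ with $V\cap V_{\Omega}(L_1,\ldots,L_{\ell+1})=\emptyset$ in $\mathbb{P}^n(\Omega)$. By the projective Nullstellensatz over the algebraically closed field $\Omega$, there is an integer $M\ge 1$ with $(x_0,\ldots,x_n)^M\subseteq I_{\mathcal{K}_{\mathcal{Q}}}+(L_1,\ldots,L_{\ell+1})$. Expanding each degree-$M$ monomial accordingly and using $I_{\mathcal{K}_{\mathcal{Q}}}=(P_1,\ldots,P_s)$ yields finitely many identities in $\mathcal{K}_{\mathcal{Q}}[x_0,\ldots,x_n]$ whose coefficients involve only finitely many elements of $\mathcal{K}_{\mathcal{Q}}$, i.e. finitely many meromorphic functions; let $S_2$ be the discrete set made up of the poles of all these functions and of the coefficients of the $L_i$, together with the zeros of one nonzero coefficient of each $L_i$ (so that each $L_i(a)$ stays a nonzero linear form). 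For $a\notin S_2$, specializing $z=a$ gives $(x_0,\ldots,x_n)^M\subseteq I(a)+(L_1(a),\ldots,L_{\ell+1}(a))$ in $\mathbb{C}[x_0,\ldots,x_n]$, so $V(a)$ is disjoint from the linear subspace $L(a):=V(L_1(a),\ldots,L_{\ell+1}(a))\subseteq\mathbb{P}^n(\mathbb{C})$, which has dimension at least $n-\ell-1$. If $\dim V(a)\ge\ell+1$ then $\dim V(a)+\dim L(a)\ge n$, and the projective dimension theorem gives $V(a)\cap L(a)\neq\emptyset$, a contradiction; hence $\dim V(a)\le\ell$.

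Combining the two parts, $\dim V(a)=\ell$ for all $a\in\mathbb{C}\setminus(S_1\cup S_2)$, a discrete set. The step I expect to be the main obstacle is exactly the uniformity issue flagged at the start: a naive ``rank is preserved at a generic point'' argument applied to each graded piece $W_N$ separately produces one exceptional set per $N$, whose union over all $N$ need not be discrete. The lower bound avoids this by invoking only the one-sided (and automatically generic) inequality $\operatorname{rank}B_N(a)\le\operatorname{rank}B_N$, whose exceptional set — the poles of the finitely many coefficients of $P_1,\ldots,P_s$ — does not depend on $N$; the upper bound avoids it by collapsing all degrees into the single power $M$ provided by the Nullstellensatz.
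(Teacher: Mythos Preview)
Your proof is correct and takes a genuinely different route from the paper's. The paper argues directly that the Hilbert functions of $I(a)$ and $I_{\mathcal{K}_{\mathcal{Q}}}$ agree: using Lemma~3.4 on specialization of bases it shows $I(a)_N = I_{\mathcal{K}_{\mathcal{Q}},N}(a)$ and hence $\dim_{\mathbb{C}} I(a)_N = \dim_{\mathcal{K}_{\mathcal{Q}}} I_{\mathcal{K}_{\mathcal{Q}},N}$ for $a$ outside a discrete set, and then reads off $\dim V(a)=\ell$ from the Hilbert polynomial.

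You instead separate the two inequalities. For $\dim V(a)\ge\ell$ you note that the matrix $B_N$ of the multiplication map $(g_j)\mapsto\sum_j g_jP_j$ has entries drawn only from the finitely many coefficients of $P_1,\ldots,P_s$, so the pole set $S_1$ is independent of $N$, and the semicontinuity $\operatorname{rank}B_N(a)\le\operatorname{rank}B_N$ gives $h_{I(a)}(N)\ge h_{I_{\mathcal{K}_{\mathcal{Q}}}}(N)$ for \emph{every} $N$ and every $a\notin S_1$. For $\dim V(a)\le\ell$ you bypass the graded pieces entirely, producing $\ell+1$ linear forms with $V\cap\{L_1=\cdots=L_{\ell+1}=0\}=\emptyset$, invoking the projective Nullstellensatz once (and using that $I\Omega[x]\cap\mathcal{K}_{\mathcal{Q}}[x]=I$ to descend the containment $(x_0,\ldots,x_n)^M\subseteq I_{\mathcal{K}_{\mathcal{Q}}}+(L_1,\ldots,L_{\ell+1})$ to $\mathcal{K}_{\mathcal{Q}}[x]$), and then specializing the resulting finite list of identities. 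The gain is exactly what you flag: the paper's argument, as written, produces for each $N$ its own discrete exceptional set (from the choice of basis of $I_{\mathcal{K}_{\mathcal{Q}},N}$ and of the $R_{jl}$), so a word is needed to get a single $a$ working for all large $N$; your decomposition sidesteps this completely. A minor simplification available to you: since $\mathbb{C}\subset\mathcal{K}_{\mathcal{Q}}$ is infinite, the $L_i$ can in fact be chosen with constant coefficients, which trims the bookkeeping in $S_2$.
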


To prove Lemma \ref{lemma3.3}, we need the following lemma shown in \cite{9-1} whose proof is included for the sake of completeness.

\begin{lemma}\label{lemma3.4}
	Let $Y$ be a $\mathcal{K}_\mathcal{Q}$-vector subspace in $\mathcal{K}_\mathcal{Q}[x_0,\ldots,x_n]_N$. For each $a\in \mathbb{C}$,
	$$
	Y(a):=\{P(a)(x_0,\ldots,x_n)\mid P\in Y\  \text{with all coefficients holomorphic at}\ a\}
	$$
	is a $\mathbb{C}$-vector subspace of $\mathbb{C}[x_0,\ldots,x_n]_N$. Assume that $\{h_l\}_{l=1}^L$ is a basis of $Y$. Then $\{h_l(a)\}_{l=1}^L$ is a basis of $Y(a)$ for all $a\in\mathbb{C}$ excluding a discrete subset.
\end{lemma}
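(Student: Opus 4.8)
The plan is to reduce the whole statement to an elementary fact about the $L\times n_N$ matrix of coefficients of the basis $h_1,\dots,h_L$. Write $h_l=\sum_{I\in\I_N}c_{l,I}(z)\x^I$ with $c_{l,I}\in\mathcal{K}_\mathcal{Q}$. I would first dispatch the first assertion, which holds for \emph{every} $a$: the set $Y_a$ of those $P\in Y$ all of whose coefficients are holomorphic at $a$ is a $\mathbb{C}$-vector subspace of $Y$ (constants from $\mathbb{C}$ are entire, so $\mathbb{C}$-scalar multiples and sums of such $P$ remain in $Y_a$), and $P\mapsto P(a)$ is a $\mathbb{C}$-linear map $Y_a\to\mathbb{C}[x_0,\dots,x_n]_N$ whose image is, by definition, $Y(a)$; hence $Y(a)$ is a $\mathbb{C}$-subspace.

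For the second assertion I would use that $h_1,\dots,h_L$ are independent over the \emph{field} $\mathcal{K}_\mathcal{Q}$, so the matrix $C=(c_{l,I})$ has rank $L$ over $\mathcal{K}_\mathcal{Q}$; fix an $L$-element column set $\J\subset\I_N$ for which $\delta:=\det\big((c_{l,I})_{1\le l\le L,\,I\in\J}\big)\not\equiv 0$, and let $S\subset\mathbb{C}$ be the discrete set consisting of the poles of all the $c_{l,I}$ together with the zeros and poles of $\delta$. I claim the conclusion holds for all $a\notin S$. Fix such an $a$: each $h_l$ has holomorphic coefficients at $a$, so $h_l(a)\in Y(a)$; and $\delta(a)\neq0$ makes the $L\times L$ matrix $(c_{l,I}(a))_{I\in\J}$ invertible, so $h_1(a),\dots,h_L(a)$ are linearly independent over $\mathbb{C}$.

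It remains to see that these $L$ vectors span $Y(a)$, and this is the step that needs care. Given $P\in Y_a$, write $P=\sum_{l=1}^L\alpha_l h_l$ with $\alpha_l\in\mathcal{K}_\mathcal{Q}$ and $P=\sum_{I\in\I_N}p_I\x^I$, so each $p_I$ is holomorphic at $a$. Reading off the coordinates indexed by $\J$ gives the $L\times L$ linear system $p_I=\sum_{l=1}^L c_{l,I}\,\alpha_l$ ($I\in\J$), and Cramer's rule writes each $\alpha_l$ as $\delta^{-1}$ times a polynomial in the $c_{l',I'}$ ($I'\in\J$) and the $p_I$ ($I\in\J$). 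Since all of these are holomorphic at $a$ and $\delta(a)\neq0$, every $\alpha_l$ is holomorphic at $a$, hence $P(a)=\sum_{l=1}^L\alpha_l(a)h_l(a)$ lies in the $\mathbb{C}$-span of $h_1(a),\dots,h_L(a)$. Combined with the independence above, this shows $\{h_l(a)\}_{l=1}^L$ is a basis of $Y(a)$ for every $a\notin S$. The only genuinely delicate point---and the sole place where ``excluding a discrete subset'' is used---is this last step: a priori $P$ could have all coefficients holomorphic at $a$ while its coordinates $\alpha_l$ relative to the fixed basis blow up there, and the Cramer computation is exactly what prevents this once $a$ avoids the zeros of the single fixed minor $\delta$.
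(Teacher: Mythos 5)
Your proof is correct and follows essentially the same route as the paper: identify an $L\times L$ minor $\delta$ of the coefficient matrix with $\delta\not\equiv 0$, observe that off the discrete set of poles of the coefficients and zeros of $\delta$ the vectors $h_l(a)$ lie in $Y(a)$ and are independent, and for spanning solve the linear system for the coordinates (the paper by inverting the matrix $A$, you by Cramer's rule — the same computation) to see that the $\mathcal{K}_\mathcal{Q}$-coordinates $\alpha_l$ of any $P\in Y$ with holomorphic coefficients at $a$ are themselves holomorphic at $a$. The only addition you make beyond the paper's proof is the short verification that $Y(a)$ is a $\mathbb{C}$-subspace, which the paper leaves implicit.
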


\begin{proof}[{\bf Proof of Lemma \ref{lemma3.4}}]
Let $(c_{jl})$ be the matrix of coefficients of $\{h_l\}_{l=1}^L$. Since $\{h_l\}_{l=1}^L$ are linearly independent over $\mathcal{K}_\mathcal{Q}$, there exists a square submatrix $A$ of $(c_{jl})$ of order $L$ such that $\det A\not\equiv 0$.

For all $a\in \mathbb{C}$ excluding a discrete subset, we have $\det A(a)\neq 0$ and all coefficients of $\{h_l\}_{l=1}^L$ are holomorphic at $a$. Hence, $\{h_l(a)\}_{l=1}^L\subset Y(a)$ and $\{h_l(a)\}_{l=1}^L$ are linearly independent over $\mathbb{C}$. On the other hand, for any $P(a)\in Y(a)$, there exists $P\in Y$ whose coefficients are holomorphic at $a$. Since $\{h_l\}_{l=1}^L$ is a basis of $Y$, there exist meromorphic functions $t_l\in \mathcal{K}_\mathcal{Q}$ such that $P=\sum_{l=1}^Lt_lh_l$. Now, we prove that $t_l$ are also holomorphic at $a$. In fact, there are coefficients $b_l$, $l=1,\ldots,L$, of $P$ such that the following system of linear equations
$$
A\left(\!\begin{array}{c}t_1\\\vdots\\
t_L
\end{array}\!\right)=\left(\!\begin{array}{c}b_1\\\vdots\\
b_L
\end{array}\!\right)
$$
has the unique solution $(t_1,\ldots,t_L)$. We note that $\det A(a)\neq 0$ and $b_1,\ldots,b_L$ are holomorphic at $a$. Hence, $t_1,\ldots,t_L$ are holomorphic at $a$. We have $P(a)=\sum_{l=1}^Lt_l(a)h_l(a)$, $t_l(a)\in \mathbb{C}$. Therefore $\{h_l(a)\}_{l=1}^L$ is a basis of $Y(a)$ and in particular $\dim_{\mathcal{K}_\mathcal{Q}}Y=\dim_{\mathbb{C}}Y(a)$.
\end{proof}

\begin{proof}[{\bf Proof of Lemma \ref{lemma3.3}}]
	Let $a$ be an arbitrary point in $\mathbb{C}$ such that all coefficients of $P_1,\ldots,P_s$ are holomorphic at $a$ (this is true for all points in $\mathbb{C}$ excluding a discrete subset). Denote by 
	 $I(a)_N:=I(a)\cap\mathbb{C}[x_0,\ldots,x_n]_N$, and define
	$$I_{\mathcal{K}_\mathcal{Q},N}(a):=\{P(a)(x_0,\ldots,x_n)\mid P\in I_{\mathcal{K}_\mathcal{Q},N}\  \text{with all coefficients holomorphic at}\ a\}.$$
	
\begin{claim}\label{claim}
	 $I(a)_N=I_{\mathcal{K}_\mathcal{Q},N}(a)$ for all $a\in \mathbb{C}$ excluding a discrete subset.
\end{claim}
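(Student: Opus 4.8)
The plan is to compare explicit spanning sets for the two $N$-th graded pieces and then feed the basis–transfer statement of Lemma~\ref{lemma3.4} into that comparison. Since $I_{\mathcal{K}_\mathcal{Q}}$ is generated by the homogeneous polynomials $P_1,\ldots,P_s$, its degree-$N$ part is the $\mathcal{K}_\mathcal{Q}$-span of the finite set
$$
\mathcal{G}_N:=\{\x^I P_j:\ 1\le j\le s,\ \deg P_j\le N,\ I\in\I_{N-\deg P_j}\},
$$
because the degree-$N$ component of any $\sum_j H_jP_j$ ($H_j\in\mathcal{K}_\mathcal{Q}[x_0,\ldots,x_n]$) is $\sum_j (H_j)_{N-\deg P_j}P_j$. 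In the same way, for every $a$ at which all coefficients of $P_1,\ldots,P_s$ are holomorphic, the ideal $I(a)$ is generated by the homogeneous polynomials $P_1(a),\ldots,P_s(a)$, so $I(a)_N$ is the $\mathbb{C}$-span of $\mathcal{G}_N(a):=\{\x^I P_j(a):\ldots\}$ (here one uses only that evaluating the coefficients of a homogeneous polynomial keeps it homogeneous of the same degree or sends it to $0$, and $0$ does not affect spans).

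First I would settle the inclusion $I(a)_N\subseteq I_{\mathcal{K}_\mathcal{Q},N}(a)$, which is the easy direction. For $a$ outside the discrete set where some coefficient of some $P_j$ has a pole, each generator $\x^I P_j\in\mathcal{G}_N$ already lies in $I_{\mathcal{K}_\mathcal{Q},N}$ and has all coefficients holomorphic at $a$, so by the very definition of $I_{\mathcal{K}_\mathcal{Q},N}(a)$ one has $\x^I P_j(a)\in I_{\mathcal{K}_\mathcal{Q},N}(a)$. Since $I_{\mathcal{K}_\mathcal{Q},N}(a)$ is a $\mathbb{C}$-vector subspace of $\mathbb{C}[x_0,\ldots,x_n]_N$ (Lemma~\ref{lemma3.4}), it contains the whole $\mathbb{C}$-span of $\mathcal{G}_N(a)$, i.e.\ $I(a)_N\subseteq I_{\mathcal{K}_\mathcal{Q},N}(a)$.

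For the reverse inclusion I would pick a subset $\mathcal{B}\subseteq\mathcal{G}_N$ that is a $\mathcal{K}_\mathcal{Q}$-basis of $I_{\mathcal{K}_\mathcal{Q},N}$ (possible, as $\mathcal{G}_N$ already spans this space over $\mathcal{K}_\mathcal{Q}$) and apply Lemma~\ref{lemma3.4} with $Y=I_{\mathcal{K}_\mathcal{Q},N}$ and exactly this basis. The lemma gives, for all $a$ off a further discrete subset, that $\{b(a):b\in\mathcal{B}\}$ is a $\mathbb{C}$-basis of $I_{\mathcal{K}_\mathcal{Q},N}(a)$. But every $b(a)$ belongs to $\mathcal{G}_N(a)\subseteq I(a)_N$, so $I_{\mathcal{K}_\mathcal{Q},N}(a)\subseteq I(a)_N$. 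Combining the two inclusions and intersecting the (finitely many) discrete exceptional sets yields $I(a)_N=I_{\mathcal{K}_\mathcal{Q},N}(a)$ for all $a\in\mathbb{C}$ outside a discrete subset.

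The only point that really needs care — the main obstacle, such as it is — is the bookkeeping of the exceptional discrete sets: one must check that holomorphy of the coefficients of $P_1,\ldots,P_s$ at $a$ and the non-vanishing at $a$ of the determinant of the coefficient submatrix underlying Lemma~\ref{lemma3.4} can be imposed simultaneously. Each of these is the complement of the zero set of an element of $\mathcal{M}$ that is not identically zero (or of a finite union of such zero sets), hence discrete, and there are only finitely many such conditions, so their union is again discrete. Everything else is the routine identification of the degree-$N$ piece of an ideal generated by homogeneous elements with the span of the monomial multiples of those generators, and the application of an already proven lemma.
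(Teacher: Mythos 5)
Your proof is correct and follows essentially the same two-inclusion strategy as the paper's: identify the degree-$N$ piece with a spanning set of monomial multiples of $P_1,\ldots,P_s$, establish $I(a)_N\subset I_{\mathcal{K}_\mathcal{Q},N}(a)$ by direct lifting, and apply Lemma~\ref{lemma3.4} to a basis of $I_{\mathcal{K}_\mathcal{Q},N}$ to get the reverse inclusion. Your choice to extract the basis $\mathcal{B}$ from $\mathcal{G}_N$ is a mild streamlining over the paper, which takes an arbitrary basis $\{h_l\}$ and must separately note that the coefficients of the $R_{jl}$ in $h_l=\sum_j R_{jl}P_j$ are holomorphic off a discrete set, but the underlying argument is the same.
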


	On the one hand, for all $P\in I(a)_N$, i.e., $P=\sum\limits_{j=1}^s R_j P_j(a)$ with $R_j\in \mathbb{C}[x_0,\ldots,x_n]$, let $\widetilde{P}=\sum\limits_{j=1}^s R_j P_j\in I_{\mathcal{K}_\mathcal{Q},N}$, then the coefficients of $\widetilde{P}$ are holomorphic at $a$ and $\widetilde{P}(a)=P$, so $P\in I_{\mathcal{K}_\mathcal{Q},N}(a)$ which implies $I(a)_N\subset I_{\mathcal{K}_\mathcal{Q},N}(a)$.\par
	On the other hand, assume that $\{h_l\}_{l=1}^L$ is a basis of  $I_{\mathcal{K}_\mathcal{Q},N}$, let $h_l=\sum\limits_{j=1}^s R_{jl} P_j$ with $R_{jl}\in \mathcal{K}_\mathcal{Q}[x_0,\ldots,x_n]$, since  the coefficients of $R_{jl}$ are holomorphic at all points of $\mathbb{C}$ excluding a discrete subset, thus $\{h_l(a)\}_{l=1}^L\subset I(a)_N$ for all points $a\in \mathbb{C}$ excluding a discrete set. By Lemma \ref{lemma3.4}, $\{h_l(a)\}_{l=1}^L$ is a basis of $I_{\mathcal{K}_\mathcal{Q},N}(a)$ for all $a\in\mathbb{C}$ excluding a discrete subset. From the above inclusion relation ``$I(a)_N\subset I_{\mathcal{K}_\mathcal{Q},N}(a)$'', we know that $\{h_l(a)\}_{l=1}^L$ is a basis of $I(a)_N$ for all $a\in\mathbb{C}$ excluding a (maybe larger) discrete subset, which completes the proof of the claim.\par
	Combining the claim and Lemma \ref{lemma3.4}, we have
	\begin{equation}\label{claim2.3}
	\dim_\mathbb{C} I(a)_N=\dim_\mathbb{C} I_{\mathcal{K}_\mathcal{Q},N}(a)=\dim_{\mathcal{K}_\mathcal{Q}}I_{\mathcal{K}_\mathcal{Q},N}
	\end{equation}for all  $a\in \mathbb{C}$ excluding a discrete subset. For such an $a\in\mathbb{C}$, by (\ref{claim2.3}) and Lemma \ref{lemma3.2},
	\begin{eqnarray*}
	\dim_\mathbb{C}\frac{\mathbb{C}[x_0,\ldots,x_n]_N}{I(a)_N}
	&=&\dim_\mathbb{C}\frac{\mathbb{C}[x_0,\ldots,x_n]_N}{I_{\mathcal{K}_\mathcal{Q},N}(a)} =\dim_\mathbb{\mathcal{K}_\mathcal{Q}}\frac{\mathcal{K}_\mathcal{Q}[x_0,\ldots,x_n]_N}{I_{\mathcal{K}_\mathcal{Q},N}}\\
&=&\frac{\Delta N^\ell}{\ell!}+O(N^{\ell-1})\ \text{for}\ N\gg 0.
	\end{eqnarray*}
	By the theory of Hilbert polynomials, we know that $\dim V(a)=\ell$, which completes the proof.
\end{proof}

Consider an arbitrary subset of $\{\widetilde{Q}_1,\ldots,\widetilde{Q}_q\}$ of $m+1$ polynomials (e.g., $\{\widetilde{Q}_1,\ldots,\widetilde{Q}_{m+1}\}$), then $\widetilde{Q}_1,\ldots,\widetilde{Q}_{m+1}$ are in $m$-subgeneral position on $\mathbb{P}^n(\mathbb{C})$.

\begin{lemma}\label{lemma3.5}
For a fixed point $a\in\mathbb{C}$ satisfying the following conditions\par
\textit{(i) the coefficients of  $P_1,\ldots,P_s,\widetilde{Q}_1,\ldots,\widetilde{Q}_{m+1}$ are holomorphic at $a$,}\par
\textit{(ii) $\widetilde{Q}_1(a),\ldots,\widetilde{Q}_{m+1}(a)$ have no nontrivial common zeros,}\par
\textit{(iii) $\dim V(a)=\ell$,} \par
\textit{there exist polynomials $\widetilde{P}_1(a)=\widetilde{Q}_1(a),\widetilde{P}_2(a),\ldots,\widetilde{P}_{\ell+1}(a)\in\mathbb{C}[x_0,\ldots,x_n]$ with
\begin{equation*}
\widetilde{P}_t(a)=\sum\limits_{j=2}^{m-\ell+t}c_{tj}\widetilde{Q}_j(a),\quad c_{tj}\in\mathbb{C},\ t\geq 2,
\end{equation*} such that
\begin{equation*}
\left(\bigcap\limits_{t=1}^{\ell+1}\widetilde{P}_t(a)\right)\cap V(a)=\emptyset.
\end{equation*}}
\end{lemma}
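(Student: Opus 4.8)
The plan is to build the polynomials $\widetilde{P}_t(a)$ one at a time by a descending-dimension argument, so that at each stage the intersection of $V(a)$ with the zero locus of $\widetilde{P}_1(a),\ldots,\widetilde{P}_t(a)$ has dimension $\ell-t$ (with the convention that dimension $-1$ means the empty set). Starting with $\widetilde{P}_1(a)=\widetilde{Q}_1(a)$: since $V(a)\subset\mathbb{P}^n(\mathbb{C})$ has dimension $\ell$ and $\widetilde{Q}_1(a)$ does not vanish identically on every component (or if it does, one simply records the drop), the set $W_1(a):=V(a)\cap\{\widetilde{Q}_1(a)=0\}$ has dimension $\le\ell-1$ on each component it meets. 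The inductive step is the heart of the matter: given $W_{t-1}(a)=V(a)\cap\bigcap_{r<t}\{\widetilde{P}_r(a)=0\}$ of dimension $\le\ell-t+1$, I must produce a linear combination $\widetilde{P}_t(a)=\sum_{j=2}^{m-\ell+t}c_{tj}\widetilde{Q}_j(a)$ that avoids every irreducible component of $W_{t-1}(a)$ of dimension exactly $\ell-t+1$, thereby cutting the dimension down by one more. Iterating $\ell$ times past $\widetilde{P}_1$ gives $\widetilde{P}_1(a),\ldots,\widetilde{P}_{\ell+1}(a)$ whose common zero locus meets $V(a)$ in dimension $\le-1$, i.e. in the empty set, which is the assertion.

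The key combinatorial input that makes the inductive step possible is the $m$-subgeneral position hypothesis on $\widetilde{Q}_1(a),\ldots,\widetilde{Q}_{m+1}(a)$, together with a counting argument bounding the number of "bad" components. Concretely: suppose at stage $t$ that $W_{t-1}(a)$ has irreducible components $Z_1,\ldots,Z_\mu$ of top dimension $\ell-t+1$. For a candidate $\widetilde{Q}_j(a)$ with $j$ in the allowed range $\{2,\ldots,m-\ell+t\}$, say it is \emph{good for} $Z_i$ if $\widetilde{Q}_j(a)$ does not vanish identically on $Z_i$. I would argue that for each component $Z_i$ there are not too many indices $j$ that are bad (this is where subgeneral position enters: if too many of the $\widetilde{Q}_j(a)$ all vanished on a positive-dimensional $Z_i$, one could produce a common point of more than $m+1$ of the hypersurfaces by intersecting $Z_i$ with the remaining ones, using that $\dim Z_i\ge 1$ throughout since $t\le\ell$), and then a standard avoidance/pigeonhole argument over the finitely many components $Z_i$ produces scalars $c_{tj}\in\mathbb{C}$ so that the linear combination $\widetilde{P}_t(a)$ is simultaneously good for all of $Z_1,\ldots,Z_\mu$ — here one uses that $\mathbb{C}$ is infinite, so a generic choice of coefficients lies outside the finitely many proper linear subspaces where $\widetilde{P}_t(a)$ would vanish on some $Z_i$. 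One also needs to keep track that the running index bound $m-\ell+t$ grows by exactly $1$ at each step, which is exactly what the induction provides as long as the number of $\widetilde{Q}_j$'s available stays ahead of the number needed to avoid the bad components.

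The main obstacle I anticipate is making the bookkeeping in the inductive step genuinely rigorous: one must verify that at stage $t$ the index range $\{2,\ldots,m-\ell+t\}$ really does contain an index that is good for \emph{every} top-dimensional component of $W_{t-1}(a)$, and the quantitative heart of this is a correct count of how many $\widetilde{Q}_j(a)$ can simultaneously contain a fixed positive-dimensional $Z_i$ — this count must interact correctly with the subgeneral-position bound $m+1$ and with the number of components (itself controlled by a degree/Bézout-type estimate using $\deg V(a)=\Delta$ and $\deg\widetilde{Q}_j=d$). A secondary subtlety is handling degenerate possibilities cleanly: if some $\widetilde{Q}_j(a)$ vanishes identically on a component, or if the dimension happens to drop by more than one at some stage, the argument should still go through (indeed it only helps), but the statement's precise form — exactly $\ell+1$ polynomials with the displayed index ranges — requires that one does not need to "waste" extra $\widetilde{Q}_j$'s, so the count must be tight. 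I would organize the proof so that the generic-linear-combination lemma (avoiding finitely many proper subspaces over an infinite field) is isolated and invoked uniformly, and so that the subgeneral-position estimate is stated once as a sublemma and reused at every inductive stage.
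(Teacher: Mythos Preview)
Your descending-dimension inductive construction is exactly the approach the paper has in mind: the proof is omitted there with a reference to Lemma~3.1 of \cite{15}, which proceeds precisely by showing that at stage $t$ no component $Z$ of $W_{t-1}(a)$ of dimension $\ell-t+1$ can contain all of $\widetilde{Q}_2(a),\ldots,\widetilde{Q}_{m-\ell+t}(a)$ (otherwise, since $\widetilde{Q}_1(a)$ already vanishes on $Z$, intersecting $Z$ with the remaining $\ell-t+1$ hypersurfaces would produce a common zero of all $m+1$ of them, contradicting~(ii)), so a generic linear combination works. One simplification: you do not need any B\'ezout-type bound on the number of irreducible components of $W_{t-1}(a)$ --- finiteness alone suffices, since for each component the bad coefficient vectors form a proper linear subspace of the coefficient space, and a finite union of proper subspaces over $\mathbb{C}$ is never everything.
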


(We note that the proof of Lemma \ref{lemma3.5} is similar to that of Lemma 3.1 in \cite{15}, which is omitted here. Moreover, (i)--(iii) are satisfied for all $a\in\mathbb{C}$ excluding a discrete subset.)

Let $\widetilde{P}_1=\widetilde{Q}_1,\ \widetilde{P}_t=\sum\limits_{j=2}^{m-\ell+t}c_{tj}\widetilde{Q}_j,\ t=2,\ldots,\ell+1$, which are homogeneous polynomials in $\mathcal{K}_\mathcal{Q}[x_0,\ldots,x_n]$ , then
\begin{equation}\label{eqNN-1}
\left\{
\begin{array}{ccc}
{P_1}&=&0 \\ &\vdots& \\ P_s&=&0 \\ \widetilde{P}_1&=&0 \\ &\vdots& \\ \widetilde{P}_{\ell+1}&=&0
\end{array}
\right.
\end{equation}have no  nontrivial common zeros for all $a\in\mathbb{C}$ excluding a discrete set.

Since there are only finitely many choices of $m+1$ polynomials from $\{\widetilde{Q}_1,\ldots,\widetilde{Q}_{q}\}$, the total number of such $\widetilde{P}_j$'s is finite, so there exists a constant $C>0$, for $t=2,\ldots,\ell$ and all $z\in\mathbb{C}$ (excluding all zeros and poles of $\widetilde{Q}_j(\fa)$), we can construct $\widetilde{P}_{1(z)},\ldots,\widetilde{P}_{(\ell+1)(z)}$ from $\widetilde{Q}_{1(z)},\ldots,\widetilde{Q}_{(m+1)(z)}$ such that
\begin{equation*}
|\widetilde{P}_{t(z)}(\fa(z))|\leq C\max\limits_{2\leq j\leq m-\ell+t}|\widetilde{Q}_{j(z)}(\fa(z))|=C|\widetilde{Q}_{(m-\ell+t)(z)}(\fa(z))|\ \text{for}\ 2\leq t\leq \ell,
\end{equation*}i.e.,
\begin{equation*}
\lambda_{D_{(m-\ell+t)(z)}}(\fa(z))\leq\lambda_{\widetilde{P}_{t(z)}}(\fa(z))+\log h''\ \text{with}\  h''\in\mathcal{C}_\mathcal{Q}\ \text{for}\ 2\leq t\leq \ell.
\end{equation*}

Combing with \eqref{eq2.4}, we have
\begin{align}
\sum\limits_{j=1}^q\lambda_{D_j}(\fa(z))&\ \leq\  \sum\limits_{t=1}^\ell\lambda_{\widetilde{P}_t(z)}(\fa(z))+\sum\limits_{j=2}^{m-\ell+1}\lambda_{D_{j(z)}}(\fa(z))+\log h'''\nonumber\\
&\ \leq\ (m-\ell+1)\sum\limits_{t=1}^\ell\lambda_{\widetilde{P}_t(z)}(\fa(z))+\log h''',\quad h'''\in\mathcal{C}_\mathcal{Q}.\label{eq2.5-0}
\end{align}

Fix a basis $[\phi_1],\ldots,[\phi_M]$ of $W_N$ with $\phi_1,\ldots,\phi_M\in\mathcal{K}_\mathcal{Q}[x_0,\ldots,x_n]$, and let
$$
F:{\mathbb{C}}\rightarrow
{\mathbb{P}}^{M-1}({\mathbb{C}})
$$
be a holomorphic map with a reduced representation ${\bf F}=(\phi_1(\fa),\ldots,\phi_M(\fa))$.
Since $\fa$ satisfies $P(\fa)\not\equiv 0$ for all homogeneous polynomials $P\in\mathcal{K}_{\mathcal{Q}}[x_0,\ldots,x_n]\setminus I_{\mathcal{K}_{\mathcal{Q}}}$, $F$ is linearly nondegenerate over $\mathcal{K}_{\mathcal{Q}}$. We have
\begin{eqnarray}
T_F(r)=NT_f(r)+o(T_f(r)).\label{eq2.5}
\end{eqnarray}

For every positive integer $N$ with $d|N$, we use the
following filtration of the vector space $W_N$ with respect to $\widetilde{P}_{1(z)},\ldots,\widetilde{P}_{\ell(z)}$. This filtration is a generalization of Corvaja-Zannier's filtration (in \cite{11,12}), which is given in \cite{9-1}.

Arrange, by the lexicographic order, the $\ell$-tuples ${\bf i}=(i_1,\ldots,i_\ell)$ of
nonnegative integers and set $\|{\bf i}\|=\sum_ji_j$.

\begin{defi}\label{def3.1}
 (i) For each ${\bf i}\in \mathbb{Z}_{\ge 0}^\ell$ and nonnegative integer $N$ with $N\ge d\|{\bf i}\|$, denote by $I_N^{\bf i}$ the subspace of ${\mathcal{K}_\mathcal{Q}}[x_0,\ldots,x_n]_{N-d\|{\bf i}\|}$ consisting of all $\gamma\in {\mathcal{K}_\mathcal{Q}}[x_0,\ldots,x_n]_{N-d\|{\bf i}\|}$ such that
	$$
	\widetilde{P}_{1(z)}^{i_1}\cdots \widetilde{P}_{\ell(z)}^{i_\ell}\gamma-\sum_{{\bf e}=(e_1,\ldots,e_\ell)>{\bf i}}\widetilde{P}_{1(z)}^{e_1}\cdots \widetilde{P}_{\ell(z)}^{e_\ell}\gamma_{\bf e}\in I_{\mathcal{K}_\mathcal{Q},N}
	$$
	(or
	$[\widetilde{P}_{1(z)}^{i_1}\cdots \widetilde{P}_{\ell(z)}^{i_\ell}\gamma]=[\sum_{{\bf e}>{\bf i}}\widetilde{P}_{1(z)}^{e_1}\cdots \widetilde{P}_{\ell(z)}^{e_\ell}\gamma_{\bf e}]$ on $W_{N}$)
	for some $\gamma_{\bf e}\in {\mathcal{K}_\mathcal{Q}}[x_0,\ldots,x_n]_{N-d\|{\bf e}\|}$.
	
(ii) Denote by $I^{\bf i}$ the homogeneous ideal in ${\mathcal{K}_\mathcal{Q}}[x_0,\ldots,x_n]$ generated by $\bigcup_{N\ge d\|{\bf i}\|}I^{\bf i}_N$.
\end{defi}

\begin{remark}\label{rkN3.3}
From this definition, we have the following properties.	
	
\textit{	(i) $(I_{\mathcal{K}_\mathcal{Q}},\widetilde{P}_{1(z)},\ldots,\widetilde{P}_{\ell(z)})_{N-d\|{\bf i}\|}\subset I^{\bf i}_N\subset {\mathcal{K}_\mathcal{Q}}[x_0,\ldots,x_n]_{N-d\|{\bf i}\|}$, where we denote by $(I_{\mathcal{K}_\mathcal{Q}},\widetilde{P}_{1(z)},\ldots,\widetilde{P}_{\ell(z)})$ the ideal in ${\mathcal{K}_\mathcal{Q}}[x_0,\ldots,x_n]$ generated by $I_{\mathcal{K}_\mathcal{Q}}\cup\{\widetilde{P}_{1(z)},\ldots,\widetilde{P}_{\ell(z)}\}$.}
	
\textit{	(ii) $I^{\bf i}\cap {\mathcal{K}_\mathcal{Q}}[x_0,\ldots,x_n]_{N-d\|{\bf i}\|}=I^{\bf i}_N$.}
	
\textit{    (iii) $\frac{{\mathcal{K}_\mathcal{Q}}[x_0,\ldots,x_n]}{I^{\bf i}}$ is a graded module over ${\mathcal{K}_\mathcal{Q}}[x_0,\ldots,x_n]$.}
	
\textit{	(iv) If ${\bf i}_1-{\bf i}_2:=(i_{1,1}-i_{2,1},\ldots,i_{1,\ell}-i_{2,\ell})\in \mathbb{Z}_{\ge 0}^\ell$, then $I_{N}^{{\bf i}_2}\subset I_{N+d\|{\bf i}_1\|-d\|{\bf i}_2\|}^{{\bf i}_1}$. Hence $I^{{\bf i}_2}\subset I^{{\bf i}_1}$.}
\end{remark}

\begin{lemma}\label{lemma3.6}
	$\{I^{\bf i}|{\bf i}\in \mathbb{Z}_{\ge 0}^\ell\}$ is a finite set.
\end{lemma}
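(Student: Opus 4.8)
The plan is to show that the ideals $I^{\bf i}$ stabilize in each coordinate direction, so that only finitely many of them occur. First I would record the key monotonicity already noted in Remark \ref{rkN3.3}(iv): if ${\bf i}_1 - {\bf i}_2 \in \mathbb{Z}_{\ge 0}^\ell$ then $I^{{\bf i}_2} \subset I^{{\bf i}_1}$. Thus along any coordinate ray ${\bf i}, {\bf i}+{\bf e}_r, {\bf i}+2{\bf e}_r, \ldots$ the ideals form an increasing chain in the Noetherian ring $\mathcal{K}_\mathcal{Q}[x_0,\ldots,x_n]$, hence stabilize: there is $c_r$ (depending a priori on the other coordinates) with $I^{{\bf i}+k{\bf e}_r} = I^{{\bf i}+c_r{\bf e}_r}$ for all $k \ge c_r$. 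The main point is to upgrade this to a \emph{uniform} bound, i.e. to find a single constant $c$ so that $I^{\bf i} = I^{\bf i'}$ whenever ${\bf i}, {\bf i}'$ agree after truncating every entry at $c$. Given such a $c$, the set $\{I^{\bf i}\}$ is parametrized by the finite set $\{0,1,\ldots,c\}^\ell$, and we are done.

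To get the uniform bound I would argue by induction on $\ell$, the number of factors $\widetilde{P}_{1(z)},\ldots,\widetilde{P}_{\ell(z)}$. For $\ell = 1$ the monotonicity chain $I^{(0)} \subset I^{(1)} \subset I^{(2)} \subset \cdots$ stabilizes at some $c$ by Noetherianity, which is exactly the statement. For the inductive step, fix the last coordinate $i_\ell$; by the induction hypothesis applied to the filtration in the first $\ell-1$ variables (with $\widetilde{P}_{\ell(z)}^{i_\ell}$ absorbed appropriately, or more cleanly by re-reading Definition \ref{def3.1} so that an element of $I^{\bf i}_N$ is one where $\widetilde{P}_{1(z)}^{i_1}\cdots\widetilde{P}_{\ell(z)}^{i_\ell}\gamma$ is congruent modulo $I_{\mathcal{K}_\mathcal{Q},N}$ to a combination of strictly larger monomials in the $\widetilde{P}$'s), there is a bound $c_{i_\ell}$ that works uniformly in $i_1,\ldots,i_{\ell-1}$ for that fixed $i_\ell$. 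Separately, by the $\ell=1$ reasoning in the direction ${\bf e}_\ell$, the ideals $I^{(\ldots,i_\ell)}$ as $i_\ell$ grows form an increasing chain and stabilize at some $c_\ell$. Taking $c = \max\{c_0,\ldots,c_{c_\ell},c_\ell\}$ gives the desired uniform truncation bound. I expect the main obstacle to be bookkeeping the interaction between the two stabilizations — one needs the coordinatewise bounds $c_{i_\ell}$ not to blow up as $i_\ell$ ranges, which follows because $i_\ell \ge c_\ell$ already forces $I^{\bf i}$ to depend only on $(i_1,\ldots,i_{\ell-1})$ and the \emph{constant} ideal $I^{(0,\ldots,0,c_\ell)}$-translate, reducing to finitely many cases $i_\ell \in \{0,\ldots,c_\ell\}$.

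An alternative, slicker route that I would also consider: form the single $\mathbb{Z}_{\ge0}^\ell$-graded (or $\mathbb{N}$-filtered) ring $R := \bigoplus_{\bf i} \widetilde{P}_{1(z)}^{i_1}\cdots\widetilde{P}_{\ell(z)}^{i_\ell}\,\mathcal{K}_\mathcal{Q}[x_0,\ldots,x_n]/I_{\mathcal{K}_\mathcal{Q}}$, a finitely generated algebra over the Noetherian ring $\mathcal{K}_\mathcal{Q}[x_0,\ldots,x_n]/I_{\mathcal{K}_\mathcal{Q}}$ (generated by the degree-one pieces together with the $\widetilde{P}_{r(z)}$), hence Noetherian by the Hilbert basis theorem. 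The subspaces $I^{\bf i}_N$ are, by Definition \ref{def3.1}, exactly the preimages of the "leading-term" annihilators in this graded ring, and the ideals $I^{\bf i}$ assemble into a single finitely generated ideal of $R$; a finite generating set involves only finitely many multidegrees ${\bf i}$, and every $I^{\bf i}$ is then determined by those finitely many via the monotonicity in Remark \ref{rkN3.3}(iv). Either way the essential input is Noetherianity of an auxiliary finitely generated graded ring, and the finiteness of $\{I^{\bf i}\}$ follows; I would present whichever version meshes best with how Definition \ref{def3.1} is used in the subsequent estimates.
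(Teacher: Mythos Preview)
Your identification of the two key ingredients---the monotonicity from Remark~\ref{rkN3.3}(iv) and Noetherianity of $\mathcal{K}_\mathcal{Q}[x_0,\ldots,x_n]$---is exactly right, and these alone suffice. But your inductive execution has a real gap at precisely the point you flag as ``bookkeeping'': the assertion that $i_\ell\ge c_\ell$ already forces $I^{\bf i}$ to depend only on $(i_1,\ldots,i_{\ell-1})$ is the uniform stabilisation you are trying to prove, and stabilisation of the single ray $k\mapsto I^{(0,\ldots,0,k)}$ at $k=c_\ell$ does not imply $I^{({\bf i}',k)}=I^{({\bf i}',c_\ell)}$ for every ${\bf i}'$ and every $k\ge c_\ell$. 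The induction can be repaired (for instance, first stabilise the family of limit ideals $\bigcup_k I^{({\bf i}',k)}$ in the variable ${\bf i}'\in\mathbb{Z}_{\ge 0}^{\ell-1}$ via the inductive hypothesis, then handle the finitely many residual strips), but this is more than bookkeeping. Your graded-ring alternative is too vague as written: it is not clear in what sense the various $I^{\bf i}$ ``assemble into a single finitely generated ideal'' of the auxiliary ring, and making this precise would amount to re-proving the well-partial-order property anyway.

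The paper's proof bypasses all of this with a three-line contradiction. Assume $\{I^{\bf i}:{\bf i}\in\mathbb{Z}_{\ge 0}^\ell\}$ is infinite and pick one index per distinct ideal, obtaining an infinite subset of $\mathbb{Z}_{\ge 0}^\ell$. Since the componentwise order on $\mathbb{Z}_{\ge 0}^\ell$ is a well-partial-order (Dickson's lemma), this subset contains an infinite ascending chain ${\bf i}_1<{\bf i}_2<\cdots$; by Remark~\ref{rkN3.3}(iv) the corresponding ideals form a strictly ascending chain $I^{{\bf i}_1}\subsetneq I^{{\bf i}_2}\subsetneq\cdots$ in the Noetherian ring $\mathcal{K}_\mathcal{Q}[x_0,\ldots,x_n]$, a contradiction. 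The paper states this simply as ``we can construct a sequence $\{{\bf i}_j\}$ with ${\bf i}_{j+1}-{\bf i}_j\in\mathbb{Z}_{\ge 0}^\ell$ and the $I^{{\bf i}_j}$ pairwise different'', leaving the extraction step implicit. This is the argument you should give; your induction is a harder road to the same destination.
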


\begin{proof}[{\bf Proof of Lemma \ref{lemma3.6}}]
	Suppose that $\sharp\{I^{\bf i}|{\bf i}\in \mathbb{Z}_{\ge 0}^\ell\}=\infty$. We can construct a sequence $\{{\bf i}_j\}_{j=1}^{\infty}$ such that ${\bf i}_{j+1}-{\bf i}_{j}\in \mathbb{Z}_{\ge 0}^\ell$ and $\{I^{{\bf i}_j}\}_{j=1}^{\infty}$ consisting of pairwise different ideals. By (iv) of Remark \ref{rkN3.3},
	$$
	I^{{\bf i}_{1}}\subset I^{{\bf i}_{2}}\subset \cdots \subset I^{{\bf i}_{j}}\subset I^{{\bf i}_{j+1}}\subset\cdots,
	$$
	which contradicts the fact that ${\mathcal{K}_\mathcal{Q}}[x_0,\ldots,x_n]$ is a Noetherian ring.
\end{proof}

Denote by
\begin{eqnarray}
\Delta_N^{\bf i}:=\dim_{{\mathcal{K}_\mathcal{Q}}}\frac{{\mathcal{K}_\mathcal{Q}}[x_0,\ldots,x_n]_{N-d\|{\bf i}\|}}{I^{\bf i}_{N}}.\label{eqN-D-1}
\end{eqnarray}

\begin{lemma}\label{lemma3.7}
	\textit{(i) There exists a positive integer $N_0$ such that, for each ${\bf i}\in \mathbb{Z}^\ell_{\ge 0}$, $\Delta_N^{\bf i}$ is independent of $N$ for all $N$ satisfying $N-d\|{\bf i}\|>N_0$.}
	
	\textit{(ii) There is an integer $\overline{\Delta}$ such that $\Delta_N^{\bf i}\le \overline{\Delta}$ for all ${\bf i}\in \mathbb{Z}^\ell_{\ge 0}$ and $N$ satisfying $N-d\|{\bf i}\|\ge 0$.}
\end{lemma}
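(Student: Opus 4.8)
The plan is to recognize $\Delta_N^{\bf i}$ as a value of the Hilbert function of the homogeneous ideal $I^{\bf i}$, reduce both assertions to facts about a single such ideal, and then use the finiteness of the family $\{I^{\bf i}\}$ (Lemma \ref{lemma3.6}) to make the relevant constants uniform in ${\bf i}$. The starting point is the identification itself: by part (ii) of Remark \ref{rkN3.3} we have $I^{\bf i}_N=I^{\bf i}\cap\mathcal{K}_\mathcal{Q}[x_0,\ldots,x_n]_{N-d\|{\bf i}\|}$, so $\Delta_N^{\bf i}=h_{I^{\bf i}}(N-d\|{\bf i}\|)$, where $h_{I^{\bf i}}$ is the Hilbert function of $I^{\bf i}$. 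In particular, for a fixed ${\bf i}$ the quantity $\Delta_N^{\bf i}$ depends on $N$ only through $M:=N-d\|{\bf i}\|$, and the divisibility hypothesis $d\mid N$ is irrelevant here.

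Next I would prove that the Hilbert polynomial $P_{I^{\bf i}}$ is a constant, by showing $\dim V(I^{\bf i})\le 0$. By part (i) of Remark \ref{rkN3.3}, $I^{\bf i}$ contains the ideal $(I_{\mathcal{K}_\mathcal{Q}},\widetilde{P}_{1(z)},\ldots,\widetilde{P}_{\ell(z)})$, hence $V(I^{\bf i})\subset V\cap\{\widetilde{P}_{1(z)}=0\}\cap\cdots\cap\{\widetilde{P}_{\ell(z)}=0\}$. By the construction preceding Definition \ref{def3.1} (which rests on Lemma \ref{lemma3.5}), the polynomials $P_1,\ldots,P_s,\widetilde{P}_{1(z)},\ldots,\widetilde{P}_{(\ell+1)(z)}$ have no common nontrivial zero in $\mathbb{P}^n(\Omega)$; equivalently, $V\cap\{\widetilde{P}_{1(z)}=0\}\cap\cdots\cap\{\widetilde{P}_{\ell(z)}=0\}$ is a projective variety disjoint from the hypersurface $\{\widetilde{P}_{(\ell+1)(z)}=0\}$. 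Since over the algebraically closed field $\Omega$ a projective variety of positive dimension meets every hypersurface, this intersection has dimension $\le 0$, and therefore so does $V(I^{\bf i})$. By the theory of Hilbert polynomials recalled in Section \ref{algg}, $P_{I^{\bf i}}$ then has degree $\le 0$; i.e. it is a constant $\delta_{\bf i}\in\mathbb{Z}_{\ge 0}$, and there is an integer $N_0({\bf i})$ with $h_{I^{\bf i}}(M)=\delta_{\bf i}$ for all $M>N_0({\bf i})$. In particular $h_{I^{\bf i}}$ takes only finitely many values on $\mathbb{Z}_{\ge 0}$.

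To conclude, I would uniformize using Lemma \ref{lemma3.6}, by which $\{I^{\bf i}:{\bf i}\in\mathbb{Z}_{\ge 0}^\ell\}$ is a finite set. For (i), put $N_0:=\max_{\bf i}N_0({\bf i})$, a finite maximum over that finite set; then for every ${\bf i}$ and every $N$ with $N-d\|{\bf i}\|>N_0$ one gets $\Delta_N^{\bf i}=h_{I^{\bf i}}(N-d\|{\bf i}\|)=\delta_{\bf i}$, independent of $N$. For (ii), each $h_{I^{\bf i}}$ takes only finitely many values on $\mathbb{Z}_{\ge 0}$, so $\sup_{M\ge 0}h_{I^{\bf i}}(M)<\infty$; setting $\overline{\Delta}:=\max_{\bf i}\sup_{M\ge 0}h_{I^{\bf i}}(M)$, again a finite maximum over the finite family, yields $\Delta_N^{\bf i}\le\overline{\Delta}$ whenever $N-d\|{\bf i}\|\ge 0$.

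The step demanding the most care is the dimension bound $\dim V(I^{\bf i})\le 0$. The subtlety is that $I^{\bf i}$ is in general strictly larger than $(I_{\mathcal{K}_\mathcal{Q}},\widetilde{P}_{1(z)},\ldots,\widetilde{P}_{\ell(z)})$, so one only gets an inclusion of the corresponding varieties, hence a dimension \emph{inequality}, not equality; one must then pass from the emptiness of the $(\ell+1)$-fold intersection (guaranteed by the construction) to a dimension bound on the $\ell$-fold intersection via the fact that a positive-dimensional projective variety over an algebraically closed field cannot avoid a hypersurface. One should also observe that there are only finitely many possibilities for the tuple $\widetilde{P}_{1(z)},\ldots,\widetilde{P}_{(\ell+1)(z)}$ (as recorded before Definition \ref{def3.1}), so that applying Lemma \ref{lemma3.6} across all of them — and thereby obtaining $N_0$ and $\overline{\Delta}$ that are genuinely independent of $z$ — is legitimate. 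The remaining ingredients, namely reading off the Hilbert function from Remark \ref{rkN3.3} and the finiteness from Lemma \ref{lemma3.6}, are routine.
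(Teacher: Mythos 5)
Your overall strategy is different from the paper's and, once patched, it does work. The paper proves that $\Delta_N^{\bf i}$ is eventually constant by first noting it is eventually a polynomial in $N$, then specializing to a generic $a\in\mathbb{C}$, transferring the Hilbert function to $\mathbb{C}[x_0,\ldots,x_n]$ via the Lemma~\ref{lemma3.4}/Lemma~\ref{lemma3.3} machinery, and invoking the classical fact over $\mathbb{C}$ that the Hilbert function of the ideal generated by $I(a)$ together with $\ell$ forms in general position on the $\ell$-dimensional variety $V(a)$ stabilizes to a constant; a bounded polynomial is constant. You instead argue intrinsically in $\mathbb{P}^n(\Omega)$: $\Delta_N^{\bf i}=h_{I^{\bf i}}(N-d\|{\bf i}\|)$, and the Hilbert polynomial of $I^{\bf i}$ has degree $\le 0$ because $\dim V(I^{\bf i})\le 0$, which you derive from the geometric fact that a positive-dimensional projective variety over an algebraically closed field meets every hypersurface. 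This is a cleaner route in spirit, and your uniformization over $\bf i$ via Lemma~\ref{lemma3.6} is carried out correctly in both parts.

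The gap is in the assertion that $P_1,\ldots,P_s,\widetilde{P}_{1(z)},\ldots,\widetilde{P}_{(\ell+1)(z)}$ have no common nontrivial zero in $\mathbb{P}^n(\Omega)$. What the construction actually provides (Lemma~\ref{lemma3.5} together with~\eqref{eqNN-1}) is that the \emph{specializations} at each $a\in\mathbb{C}$ outside a discrete set have no common zero in $\mathbb{P}^n(\mathbb{C})$; it says nothing directly about $\Omega$. The two are not tautologically equivalent: passing from generic specialization emptiness to $V\bigl(I_{\mathcal{K}_\mathcal{Q}}+(\widetilde{P}_{1(z)},\ldots,\widetilde{P}_{(\ell+1)(z)})\bigr)=\emptyset$ over $\Omega$ requires an argument — either the inertia-form argument of the proof of Lemma~\ref{lemma3.1}, producing $\widetilde{R}\in\mathcal{K}_\mathcal{Q}\setminus\{0\}$ with $x_i^{\sigma}\widetilde{R}$ in that ideal so that $x_i^{\sigma}$ lies in it, or the Hilbert-function transfer in the Claim inside the proof of Lemma~\ref{lemma3.3}, showing the Hilbert function over $\mathcal{K}_\mathcal{Q}$ agrees with the one over $\mathbb{C}$ at a generic $a$ and hence vanishes eventually. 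You flagged a different subtlety (that $I^{\bf i}$ may strictly contain the generating ideal — which is harmless, since a larger ideal gives a smaller variety), but not this one, and without it the crucial bound $\dim V(I^{\bf i})\le 0$ is unsupported. With that step supplied, your proof is complete.
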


\begin{proof}[{\bf Proof of Lemma \ref{lemma3.7}}]
	(i)  For each ${\bf i}\in \mathbb{Z}_{\ge 0}^{\ell}$, by (iii) of Remark \ref{rkN3.3},
	$$
	\Delta_N^{\bf i}=\dim_{{\mathcal{K}_\mathcal{Q}}}\frac{{\mathcal{K}_\mathcal{Q}}[x_0,\ldots,x_n]_{N-d\|{\bf i}\|}}{I^{\bf i}_{N}}
	$$
	is a polynomial of $N$ for $N$ big enough. (See Theorem 14 in \cite{10-2}.)
	
	Similar to the proof of Lemma \ref{lemma3.3}, for all $a\in\mathbb{C}$ excluding a discrete subset,
	\begin{equation*}
	(I(a),\widetilde{P}_{1(z)}(a),\ldots,\widetilde{P}_{\ell(z)}(a))_{N-d\|{\bf i}\|}=(I_{\mathcal{K}_\mathcal{Q}},\widetilde{P}_{1(z)},\ldots,\widetilde{P}_{\ell(z)})_{N-d\|{\bf i}\|}(a)
	\end{equation*}and
	\begin{align*}
	\dim_\mathbb{C}(I(a),\widetilde{P}_{1(z)}(a),\ldots,\widetilde{P}_{\ell(z)}(a))_{N-d\|{\bf i}\|} & =\dim_\mathbb{C}(I_{\mathcal{K}_\mathcal{Q}},\widetilde{P}_{1(z)},\ldots,\widetilde{P}_{\ell(z)})_{N-d\|{\bf i}\|}(a)\\
	& =\dim_{{\mathcal{K}_\mathcal{Q}}}(I_{\mathcal{K}_\mathcal{Q}},\widetilde{P}_{1(z)},\ldots,\widetilde{P}_{\ell(z)})_{N-d\|{\bf i}\|}.
	\end{align*}
	According to Lemma \ref{lemma3.5} and equation \eqref{eqNN-1},  $\widetilde{P}_{1(z)}(a),\ldots,\widetilde{P}_{\ell(z)}(a)$ are in general position in $V(a)$ for all $a\in\mathbb{C}$ excluding a discrete subset. Thus, we can find a point $a\in\mathbb{C}$ such that
	\begin{align*}
	\dim_{\mathcal{K}_\mathcal{Q}}\frac{\mathcal{K}_\mathcal{Q}[x_0,\ldots,x_n]_{N-d\|{\bf i}\|}}{(I_{\mathcal{K}_\mathcal{Q}},\widetilde{P}_{1(z)},\ldots,\widetilde{P}_{\ell(z)})_{N-d\|{\bf i}\|}} & =\dim_\mathbb{C}\frac{\mathbb{C}[x_0,\ldots,x_n]_{N-d\|{\bf i}\|}}{(I_{\mathcal{K}_\mathcal{Q}},\widetilde{P}_{1(z)},\ldots,\widetilde{P}_{\ell(z)})_{N-d\|{\bf i}\|}(a)}\\
	& =\dim_\mathbb{C}\frac{\mathbb{C}[x_0,\ldots,x_n]_{N-d\|{\bf i}\|}}{(I(a),\widetilde{P}_{1(z)}(a),\ldots,\widetilde{P}_{\ell(z)}(a))_{N-d\|{\bf i}\|}}
	\end{align*}
	and, by the theory of Hilbert functions, there exists an integer $N_1>0$ such that
	\begin{equation*}
	\dim_\mathbb{C}\frac{\mathbb{C}[x_0,\ldots,x_n]_{N-d\|{\bf i}\|}}{(I(a),\widetilde{P}_{1(z)}(a),\ldots,\widetilde{P}_{\ell(z)}(a))_{N-d\|{\bf i}\|}}
	\end{equation*}
	is a constant for all ${\bf i}\in \mathbb{Z}_{\ge 0}^{\ell}$ and $N$ with $N-d\|{\bf i}\|>N_1$.
	
	By (i) of Remark \ref{rkN3.3},
	\begin{eqnarray}
	\dim_{\mathcal{K}_\mathcal{Q}}\frac{{\mathcal{K}_\mathcal{Q}}[x_0,\ldots,x_n]_{N-d\|{\bf i}\|}}{I^{\bf i}_{N}}\le\dim_{\mathcal{K}_\mathcal{Q}}\frac{{\mathcal{K}_\mathcal{Q}}[x_0,\ldots,x_n]_{N-d\|{\bf i}\|}}{(I_{\mathcal{K}_\mathcal{Q}},\widetilde{P}_{1(z)},\ldots,\widetilde{P}_{\ell(z)})_{N-d\|{\bf i}\|}}.\label{eqN-D-2}
	\end{eqnarray}
	Hence, there is an integer $N_2^{\bf i}(>N_1)$ such that $\Delta_N^{\bf i}$ is also a constant for all $N$ satisfying $N-d\|{\bf i}\|>N_2^{\bf i}$. Set $\Delta^{\bf i}$ to be this constant. We note that $N_2^{\bf i}$ depends on $I^{\bf i}$ and $\{I^{\bf i}|{\bf i}\in \mathbb{Z}_{\ge 0}^\ell\}$ is a finite set by Lemma \ref{lemma3.6}. Take $N_0=\max\{N_2^{\bf i}|{\bf i}\in \mathbb{Z}_{\ge 0}^\ell\}$, we have $\Delta^{\bf i}=\Delta^{\bf i}_N$ for all ${\bf i}\in \mathbb{Z}_{\ge 0}^\ell$ and $N$ satisfying $N-d\|{\bf i}\|>N_0$.
	
	(ii) By (\ref{eqN-D-2}), we have $\Delta^{\bf i}_N\le \dim_{\mathcal{K}_\mathcal{Q}}\frac{{\mathcal{K}_\mathcal{Q}}[x_0,\ldots,x_n]_{N-d\|{\bf i}\|}}{(I_{\mathcal{K}_\mathcal{Q}},\widetilde{P}_{1(z)},\ldots,\widetilde{P}_{\ell(z)})_{N-d\|{\bf i}\|}}$. Hence, taking $$\overline{\Delta}:=\max\left\{\left.\dim_{\mathcal{K}_\mathcal{Q}}\frac{{\mathcal{K}_\mathcal{Q}}[x_0,\ldots,x_n]_{N}}{(I_{\mathcal{K}_\mathcal{Q}},\widetilde{P}_{1(z)},\ldots,\widetilde{P}_{\ell(z)})_{N}}\right|N=0,1,\ldots,N_1+1\right\},$$ we get (ii) of Lemma \ref{lemma3.7}.
\end{proof}

Set $\Delta_0:=\min_{{\bf i}\in \mathbb{Z}_{\ge 0}^\ell}\Delta^{\bf i}$, then $\Delta_0=\Delta^{{\bf i}_0}$
for some ${\bf i}_0\in \mathbb{Z}_{\ge 0}^\ell$.

\begin{remark}\label{rkN3.6}
\textit{	By (iv) of Remark \ref{rkN3.3}, if ${\bf i}-{\bf i}_0\in \mathbb{Z}_{\ge 0}^\ell$, then $\Delta^{\bf i}\le \Delta^{{\bf i}_0}$.}
\end{remark}

Now, for an integer $N$ big enough, divisible by $d$, we construct the following filtration of $W_N$ with respect to $\{\widetilde{P}_{1(z)},\ldots,\widetilde{P}_{\ell(z)}\}$.

Denote by $\tau_N$ the set of ${\bf i}\in \mathbb{Z}_{\ge 0}^\ell$ with $N-d\|{\bf i}\|\ge 0$, arranged by
the lexicographic order.

Define
$$
W_{{\bf i}}=\sum_{{\bf e}\ge{\bf i}}\widetilde{P}_{1(z)}^{e_1}\cdots \widetilde{P}_{\ell(z)}^{e_\ell}\cdot{{\mathcal{K}_\mathcal{Q}}[x_0,\ldots,x_n]}_{N-d\|{\bf e}\|}.
$$
Plainly $W_{(0,\ldots,0)}={\mathcal{K}_\mathcal{Q}}[x_0,\ldots,x_n]_N$ and $W_{{\bf i}}\supset W_{{\bf i}'}$ if
${\bf i}'> {\bf i}$, so $\{W_{{\bf i}}\}$ is a filtration of ${\mathcal{K}_\mathcal{Q}}[x_0,\ldots,x_n]_N$. Set $W^*_{{\bf i}}=\{[g]\in W_N|g\in W_{{\bf i}}\}$. Hence, $\{W^*_{{\bf i}}\}$ is a filtration of $W_N$.

\begin{lemma}\label{lemma3.8}
\textit{	Suppose that ${\bf i}'$ follows ${\bf i}$ in the lexicographic order, then
	$$
	\frac{W_{\bf i}^*}{W_{{\bf i}'}^*}\cong\frac{{\mathcal{K}_\mathcal{Q}}[x_0,\ldots,x_n]_{N-d\|{\bf i}\|}}{I^{\bf i}_{N}}.
	$$}
\end{lemma}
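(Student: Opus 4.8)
The plan is to exhibit an explicit surjective $\mathcal{K}_\mathcal{Q}$-linear map from $\mathcal{K}_\mathcal{Q}[x_0,\ldots,x_n]_{N-d\|{\bf i}\|}$ onto $W_{\bf i}^*/W_{{\bf i}'}^*$ whose kernel is exactly $I^{\bf i}_N$, and then invoke the first isomorphism theorem. First I would define $\psi:\mathcal{K}_\mathcal{Q}[x_0,\ldots,x_n]_{N-d\|{\bf i}\|}\to W_{\bf i}^*/W_{{\bf i}'}^*$ by sending $\gamma$ to the class of $[\widetilde{P}_{1(z)}^{i_1}\cdots\widetilde{P}_{\ell(z)}^{i_\ell}\gamma]$ modulo $W_{{\bf i}'}^*$; note that $\widetilde{P}_{1(z)}^{i_1}\cdots\widetilde{P}_{\ell(z)}^{i_\ell}\gamma$ is homogeneous of degree $N$ and lies in $W_{\bf i}$ by definition of $W_{\bf i}$ (it is the leading term with exponent vector ${\bf i}$), so the map is well defined into $W_{\bf i}^*$, hence into the quotient. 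Linearity is immediate.

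Next I would check surjectivity. An element of $W_{\bf i}^*$ is $[g]$ with $g=\sum_{{\bf e}\ge{\bf i}}\widetilde{P}_{1(z)}^{e_1}\cdots\widetilde{P}_{\ell(z)}^{e_\ell}\gamma_{\bf e}$ for suitable homogeneous $\gamma_{\bf e}$. Split off the ${\bf e}={\bf i}$ term: $g=\widetilde{P}_{1(z)}^{i_1}\cdots\widetilde{P}_{\ell(z)}^{i_\ell}\gamma_{\bf i}+g'$ where $g'=\sum_{{\bf e}>{\bf i}}(\cdots)\gamma_{\bf e}\in W_{{\bf i}''}$ for the immediate successor; since ${\bf i}'$ is the successor of ${\bf i}$, every ${\bf e}>{\bf i}$ satisfies ${\bf e}\ge{\bf i}'$, so $g'\in W_{{\bf i}'}$ and $[g']\in W_{{\bf i}'}^*$. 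Therefore $[g]\equiv[\widetilde{P}_{1(z)}^{i_1}\cdots\widetilde{P}_{\ell(z)}^{i_\ell}\gamma_{\bf i}]=\psi(\gamma_{\bf i})$ modulo $W_{{\bf i}'}^*$, proving $\psi$ is onto.

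Finally, and this is the crux, I would identify $\ker\psi$ with $I^{\bf i}_N$. By construction $\psi(\gamma)=0$ in the quotient iff $[\widetilde{P}_{1(z)}^{i_1}\cdots\widetilde{P}_{\ell(z)}^{i_\ell}\gamma]\in W_{{\bf i}'}^*$, i.e. iff $\widetilde{P}_{1(z)}^{i_1}\cdots\widetilde{P}_{\ell(z)}^{i_\ell}\gamma$ differs by an element of $I_{\mathcal{K}_\mathcal{Q},N}$ from some $g\in W_{{\bf i}'}$; writing $g=\sum_{{\bf e}\ge{\bf i}'}\widetilde{P}_{1(z)}^{e_1}\cdots\widetilde{P}_{\ell(z)}^{e_\ell}\gamma_{\bf e}$ and using again that ${\bf e}\ge{\bf i}'$ is equivalent to ${\bf e}>{\bf i}$, this says precisely that $[\widetilde{P}_{1(z)}^{i_1}\cdots\widetilde{P}_{\ell(z)}^{i_\ell}\gamma]=[\sum_{{\bf e}>{\bf i}}\widetilde{P}_{1(z)}^{e_1}\cdots\widetilde{P}_{\ell(z)}^{e_\ell}\gamma_{\bf e}]$ on $W_N$, which is exactly the defining condition for $\gamma\in I^{\bf i}_N$ in Definition \ref{def3.1}. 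Hence $\ker\psi=I^{\bf i}_N$, and the first isomorphism theorem gives the claimed isomorphism. The only point requiring care throughout is the repeated use of the fact that ${\bf i}'$ is the \emph{immediate} successor of ${\bf i}$ in the lexicographic order, so that the strict inequality ${\bf e}>{\bf i}$ and the weak inequality ${\bf e}\ge{\bf i}'$ coincide on exponent vectors; I would state this equivalence once at the outset and reuse it in both the surjectivity and the kernel computations.
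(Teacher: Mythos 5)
Your proposal is correct and follows essentially the same route as the paper: define the map $\gamma\mapsto[\widetilde{P}_{1(z)}^{i_1}\cdots\widetilde{P}_{\ell(z)}^{i_\ell}\gamma]\bmod W_{{\bf i}'}^*$, check surjectivity, compute the kernel to be $I^{\bf i}_N$, and invoke the first isomorphism theorem. You merely spell out the surjectivity step (which the paper calls ``obvious'') and make explicit the equivalence ${\bf e}>{\bf i}\iff{\bf e}\ge{\bf i}'$ coming from ${\bf i}'$ being the immediate successor, both of which are welcome clarifications.
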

\begin{proof}[{\bf Proof of Lemma \ref{lemma3.8}}]
	Define a vector space homomorphism
	$$
	\varphi:{\mathcal{K}_\mathcal{Q}}[x_0,\ldots,x_n]_{N-d\|{\bf i}\|}\rightarrow \frac{W_{\bf i}^*}{W_{{\bf i}'}^*},
	$$
	which maps $\gamma\in {\mathcal{K}_\mathcal{Q}}[x_0,\ldots,x_n]_{N-d\|{\bf i}\|}$ to $[\widetilde{P}_{1(z)}^{i_1}\cdots \widetilde{P}_{\ell(z)}^{i_\ell}\gamma](\in W_{\bf i}^*)$ modulo $W_{{\bf i}'}^*$. Obviously, it is surjective.
	
	Let $\ker \varphi$ be the kernel of $\varphi$. Suppose $\gamma\in \ker \varphi$. This means
	$$
	[\widetilde{P}_{1(z)}^{i_1}\cdots \widetilde{P}_{\ell(z)}^{i_\ell}\gamma]\in W_{{\bf i}'}^*
	$$
	(or
	$[\widetilde{P}_{1(z)}^{i_1}\cdots \widetilde{P}_{\ell(z)}^{i_\ell}\gamma]=[\sum_{{\bf e}>{\bf i}}\widetilde{P}_{1(z)}^{e_1}\cdots \widetilde{P}_{\ell(z)}^{e_\ell}\gamma_{\bf e}]$ for some $\gamma_{\bf e}\in {\mathcal{K}_\mathcal{Q}}[x_0,\ldots,x_n]_{N-d\|{\bf e}\|}$), i.e., $\gamma\in I_N^{\bf i}$. Hence, $\ker \varphi\subset I_N^{\bf i}$. On the other hand, if $\gamma\in I_N^{\bf i}$, then, there exist $\gamma_{\bf e}\in {\mathcal{K}_\mathcal{Q}}[x_0,\ldots,x_n]_{N-d\|{\bf e}\|}$ such that
	$$
	[\widetilde{P}_{1(z)}^{i_1}\cdots \widetilde{P}_{\ell(z)}^{i_\ell}\gamma]=[\sum_{{\bf e}>{\bf i}}\widetilde{P}_{1(z)}^{e_1}\cdots \widetilde{P}_{\ell(z)}^{e_\ell}\gamma_{\bf e}]\in W_{\bf i}^*,
	$$
	i.e., $\gamma\in \ker \varphi$. Hence, $\ker \varphi= I_N^{\bf i}$, which completes the proof of Lemma \ref{lemma3.8}.
\end{proof}

Combining with (\ref{eqN-D-1}), we have
$$
\dim{\frac{W_{{\bf i}}^*}{W_{{\bf i}'}^*}}=\Delta^{{\bf i}}_N.
$$

Set
$$
\tau^0_N=\{{\bf i}\in \tau_N|N-d\|{\bf i}\|>N_0\ \mbox{and}\ {\bf i}-{\bf i}_0\in\mathbb{Z}_{\ge 0}^\ell\}.
$$
We have the following properties.

\begin{lemma}\label{lemma3.9}
\textit{	(i) $\Delta_0=\Delta^{\bf i}$ for all ${\bf i}\in \tau^0_N$.}
	
\textit{	(ii) $\sharp\tau^0_N=\frac{1}{d^{\ell}}\frac{N^{\ell}}{\ell!}+O(N^{\ell-1})$.}
	
\textit{	(iii) $\Delta^{\bf i}_N=\Delta d^{\ell}$ for all ${\bf i}\in \tau^0_N$.}
\end{lemma}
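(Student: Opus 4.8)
The plan is to derive all three statements quickly from Remark \ref{rkN3.6}, Lemma \ref{lemma3.7}, Lemma \ref{lemma3.8} and Lemma \ref{lemma3.2}, reserving the only genuine computation for part (iii).

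\emph{Part (i).} By construction $\tau^0_N\subset\{{\bf i}\in\mathbb{Z}_{\ge 0}^\ell\mid {\bf i}-{\bf i}_0\in\mathbb{Z}_{\ge 0}^\ell\}$, so Remark \ref{rkN3.6} gives $\Delta^{\bf i}\le\Delta^{{\bf i}_0}=\Delta_0$ for every ${\bf i}\in\tau^0_N$. Since $\Delta_0=\min_{{\bf i}}\Delta^{\bf i}$ we also have $\Delta^{\bf i}\ge\Delta_0$, whence $\Delta^{\bf i}=\Delta_0$ on $\tau^0_N$. Moreover every ${\bf i}\in\tau^0_N$ satisfies $N-d\|{\bf i}\|>N_0$, so Lemma \ref{lemma3.7}(i) yields $\Delta^{\bf i}_N=\Delta^{\bf i}=\Delta_0$; thus the graded pieces of the filtration indexed by $\tau^0_N$ all have the same dimension $\Delta_0$, a number to be pinned down in (iii).

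\emph{Part (ii).} Setting ${\bf j}={\bf i}-{\bf i}_0$ identifies $\tau^0_N$ with the set of ${\bf j}\in\mathbb{Z}_{\ge 0}^\ell$ satisfying $\|{\bf j}\|<\frac{N-N_0-d\|{\bf i}_0\|}{d}=\frac{N}{d}+O(1)$. The number of $\ell$-tuples in $\mathbb{Z}_{\ge 0}^\ell$ with $\|{\bf j}\|\le K$ is $\binom{K+\ell}{\ell}=\frac{K^\ell}{\ell!}+O(K^{\ell-1})$, so with $K=\frac{N}{d}+O(1)$ we obtain $\sharp\tau^0_N=\frac{1}{d^\ell}\frac{N^\ell}{\ell!}+O(N^{\ell-1})$.

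\emph{Part (iii).} Since $\{W^*_{\bf i}\}_{{\bf i}\in\tau_N}$ is a filtration of $W_N$, summing the dimensions of its successive quotients and invoking Lemma \ref{lemma3.8} together with \eqref{eqN-D-1} gives
\[
\dim_{\mathcal{K}_\mathcal{Q}}W_N=\sum_{{\bf i}\in\tau_N}\Delta^{\bf i}_N=\sum_{{\bf i}\in\tau^0_N}\Delta^{\bf i}_N+\sum_{{\bf i}\in\tau_N\setminus\tau^0_N}\Delta^{\bf i}_N.
\]
By part (i) the first sum equals $\Delta_0\cdot\sharp\tau^0_N$. For the second, $\tau_N\setminus\tau^0_N$ splits into the ${\bf i}\in\tau_N$ with $N-d\|{\bf i}\|\le N_0$ (a slab $\frac{N-N_0}{d}\le\|{\bf i}\|\le\frac{N}{d}$ of bounded width, hence $O(N^{\ell-1})$ lattice points) and, for $1\le j\le\ell$, the conditions $i_j<i_{0,j}$ (each cutting out $O(N^{\ell-1})$ points); since $\Delta^{\bf i}_N\le\overline{\Delta}$ for all ${\bf i}$ by Lemma \ref{lemma3.7}(ii), this sum is $O(N^{\ell-1})$. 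Hence, using part (ii) and Lemma \ref{lemma3.2} (namely $\dim_{\mathcal{K}_\mathcal{Q}}W_N=M=\frac{\Delta N^\ell}{\ell!}+O(N^{\ell-1})$),
\[
\frac{\Delta N^\ell}{\ell!}+O(N^{\ell-1})=\Delta_0\cdot\Big(\frac{1}{d^\ell}\frac{N^\ell}{\ell!}+O(N^{\ell-1})\Big)+O(N^{\ell-1}),
\]
and comparing the coefficients of $N^\ell$ (recall $\ell>0$) forces $\Delta_0=\Delta d^\ell$. Combined with part (i), this gives $\Delta^{\bf i}_N=\Delta_0=\Delta d^\ell$ for every ${\bf i}\in\tau^0_N$.

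\emph{Main obstacle.} There is no deep difficulty here; the substance lies in the preparatory lemmas. The two points that need care are: (a) verifying that the telescoping sum of graded-piece dimensions really recovers $\dim_{\mathcal{K}_\mathcal{Q}}W_N$, i.e. that $W^*_{\bf i}$ vanishes once ${\bf i}$ passes beyond $\tau_N$ in the lexicographic order, which holds because then $N-d\|{\bf e}\|<0$ for every ${\bf e}\ge{\bf i}$; and (b) the elementary but slightly fussy lattice-point estimates $\sharp\tau^0_N=\frac{N^\ell}{d^\ell\ell!}+O(N^{\ell-1})$ and $\sharp(\tau_N\setminus\tau^0_N)=O(N^{\ell-1})$, where the boundary slab $N-d\|{\bf i}\|\le N_0$ and the finitely many half-space conditions ${\bf i}\not\ge{\bf i}_0$ must be bounded separately.
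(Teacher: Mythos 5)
Your proof is correct and follows essentially the same route as the paper's: part (i) combines the minimality of $\Delta_0$ with Remark \ref{rkN3.6} and Lemma \ref{lemma3.7}(i), part (ii) is the lattice-point count (you do it by translating by ${\bf i}_0$, the paper by subtracting the two boundary sets from $\sharp\tau_N$, but the asymptotics are identical), and part (iii) is the same telescoping of the filtration dimensions against Lemma \ref{lemma3.2}. Your added observation in the ``main obstacle'' paragraph --- that $W^*_{\bf i}$ vanishes once ${\bf i}$ passes the last element of $\tau_N$ in lexicographic order, because every ${\bf e}\ge{\bf i}$ then has $N-d\|{\bf e}\|<0$ --- is a justification the paper leaves implicit, and it is correct.
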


\begin{proof}[{\bf Proof of Lemma \ref{lemma3.9}}]
	(i) By the definition of $\tau^0_N$, we have $\Delta^{\bf i}_N=\Delta^{\bf i}$ for ${\bf i}\in \tau^0_N$. On the other hand, $\Delta^{\bf i}\le \Delta^{{\bf i}_0}$ (note that ${\bf i}-{\bf i}_0\in \mathbb{Z}_{\ge 0}^\ell$ and Remark \ref{rkN3.6}). By the minimality of $\Delta^{{\bf i}_0}$, we obtain (i).
	
	(ii) Clearly,
	$$
	\sharp\tau_N=\left(\begin{matrix}\frac{N}{d}+\ell \\ \ell\end{matrix}\right)=\frac{1}{d^{\ell}}\frac{N^{\ell}}{\ell!}+O(N^{\ell-1}),\ \ \
	\sharp\{{\bf i}\in\tau_N|N-d\|{\bf i}\|\le N_0\}=O(N^{\ell-1})
	$$
	and
	$$
	\sharp\{{\bf i}\in\tau_N|{\bf i}-{\bf i}_0=(i_1-i_{0,1},\ldots,i_\ell-i_{0,\ell})\ \mbox{with\ some}\ i_j-i_{0,j}<0\}=O(N^{\ell-1}).
	$$
	It implies that $\sharp\tau^0_N=\frac{1}{d^{\ell}}\frac{N^{\ell}}{\ell!}+O(N^{\ell-1})$.
	
	(iii) By (ii) of Lemma \ref{lemma3.7}, $\Delta_N^{{\bf i}}$ is bounded for all ${\bf i}$ and $N$. Hence, combining (i), (ii) and Lemma \ref{lemma3.2},
	\begin{eqnarray*}
		\frac{\Delta N^\ell}{\ell!}+O(N^{\ell-1})&=&\sum_{{\bf i}\in \tau_N}\Delta_N^{\bf i}=\Delta_0\cdot \sharp\tau^0_N+\sum_{{\bf i}\in \tau_N\setminus\tau^0_N}\Delta_N^{\bf i}\\
		&=&\Delta_0\left(\frac{1}{d^{\ell}}\frac{N^\ell}{\ell!}+O(N^{\ell-1})\right)+O(N^{\ell-1}),
	\end{eqnarray*}
	which implies $\Delta_0=\Delta d^{\ell}$.
\end{proof}

We choose a basis $\mathcal{B}=\{[\psi_1],\ldots,[\psi_M]\}$ of $W_N$ with respect to the above filtration. Let $[\psi]$ be an element of the basis, which lies in
$W^*_{{\bf i}}/W^*_{{\bf i}'}$, we may write $\psi=\widetilde{P}_{1(z)}^{i_1}\cdots \widetilde{P}_{\ell(z)}^{i_\ell}\gamma$, where $\gamma\in {\mathcal{K}_\mathcal{Q}}[x_0,\ldots,x_n]_{N-d\|{\bf i}\|}$. For every $1\le j\le \ell$, we have
\begin{eqnarray}
\sum_{{\bf i}\in \tau_N}\Delta_N^{\bf i}i_j=\frac{\Delta N^{\ell+1}}{(\ell+1)!d}+O(N^{\ell}).\label{eq3-17}
\end{eqnarray}
(The proof of (\ref{eq3-17}) is similar to (3.6) in \cite{12}). Hence
\begin{equation}\label{eq2.12}
\sum_{t=1}^{M}\lambda_{\psi_t}(\fa(z))\ge \left( \frac{\Delta N^{\ell+1}}{(\ell+1)!d}+O(N^{\ell})\right)\cdot \sum_{j=1}^\ell\lambda_{\widetilde{P}_{j(z)}}(\fa(z))+\log h'''',\quad h''''\in\mathcal{C}_\mathcal{Q}.
\end{equation}

The basis $[\psi_1],\ldots,[\psi_M]$ can be written as linear forms $L_1,\ldots,L_M$ (over $\mathcal{K}_\mathcal{Q}$) in the basis $[\phi_1],\ldots,[\phi_M]$  and $\psi_t(\fa)=L_t({\bf{F}})$. Since there are only finitely many choices of
$\{\widetilde{Q}_{1(z)},\ldots,\widetilde{Q}_{(m+1)(z)}\}$, the collection of all possible linear forms $L_t(1\leq t\leq M)$ is a finite set, and denote it by $\mathcal{L}=\{L_\mu\}_{\mu=1}^\Lambda,\Lambda<\infty$. It is easy to see that $\mathcal{K}_\mathcal{L}\subset\mathcal{K}_\mathcal{Q}$.

By \eqref{eq2.5-0} and \eqref{eq2.12}, take integration on the circle of radius $r$, we have
\begin{eqnarray}
&&\|\ \frac{\Delta N^{\ell+1}}{(\ell+1)!d}\left(1+o(1)\right)\cdot\sum_{j=1}^qm_f(r,D_j)\nonumber\\
&\le& (m-\ell+1)\int_0^{2\pi}\max_{B}\sum_{j\in B}\lambda_{L{j}}({\bf{F}}(re^{i\theta}))\frac{d\theta}{2\pi}+o(T_f(r)),\label{eq3-22}
\end{eqnarray}
where the set $B$ ranges over all subset of $\{1,\ldots,\Lambda\}$ such that the linear forms $\{L_j\}_{j\in B}$ are linearly independent. By Theorem A4.2.1 in \cite{13}, we have, for any $\epsilon>0$,
\begin{eqnarray}
\|\int_0^{2\pi}\max_{B}\sum_{j\in
B}\lambda_{L_j}({\bf{F}}(re^{i\theta}))\frac{d\theta}{2\pi}
\le (M+\epsilon)T_F(r)+o(T_f(r)).\label{eq2.11}
\end{eqnarray}
Take $\epsilon=\frac{1}{2}$ in \eqref{eq2.11}, and from \eqref{eq2.5} and \eqref{eq3-22}, we obtain
\begin{eqnarray*}
&&\|\ \frac{\Delta N^{\ell+1}}{(\ell+1)!d}\left(1+o(1)\right)\cdot\sum_{j=1}^qm_f(r,D_j)\\
&\le&(m-\ell+1)\left(M+\frac{1}{2}\right)N T_f(r)+o(T_f(r))\\
&=&(m-\ell+1)\left(\frac{\Delta N^{\ell}}{\ell!}+o(N^{\ell})+\frac{1}{2}\right)N T_f(r)+o(T_f(r)),
\end{eqnarray*}
i.e.,
\begin{equation}\label{eq2.13}
\|\ \frac{1}{d}\sum_{j=1}^qm_f(r,D_j)\leq (m-\ell+1)(\ell+1+o(1))T_f(r).
\end{equation}
For $\varepsilon>0$ given in the Main Theorem, take $N$ large enough such that
$
o(1)<\varepsilon
$.
Then
\begin{eqnarray}
\|\frac{1}{d}\sum_{j=1}^qm_f(r,D_j)\le (m-\ell+1)(\ell+1+\varepsilon)T_f(r).\label{eq2.14}
\end{eqnarray}

Since $\ell\leq n\leq m$, we have
\begin{equation*}
(m-\ell+1)(\ell+1)\leq\left\{\begin{array}{cc}
	(\frac{m}{2}+1)^2, &\ m\leq 2n, \\
	(m-n+1)(n+1), &\ m>2n,
	\end{array} \right.
\end{equation*}
which completes the proof of the Main Theorem.

\begin{defi}\label{defi3.2}
	Let $f$ be a nonconstant holomorphic curve in $\mathbb{P}^n({\mathbb{C}})$ with a reduced representation $(f_0,\ldots,f_n)$ and let $\mathcal{D}=\{D_1,\ldots,D_q\}$ be a family of moving hypersurfaces defined by a set of homogeneous polynomials $\mathcal{Q}=\{Q_1,\ldots,Q_q\}$. For an integer $0<\ell\le n$, $f$ is said to be \textbf{$\ell$-nondegenerate} over $\mathcal{K}_{\mathcal{Q}}$ if $\dim V=\ell$. Here $V$ is the variety constructed by all homogeneous polynomials $P\in\mathcal{K}_{\mathcal{Q}}[x_0,\ldots,x_n]$ such that $P(f_0,\ldots,f_n)\equiv 0$.
\end{defi}

By (\ref{eq2.14}), we have the following Cartan-Nochka type second main theorem.

\begin{theorem}\label{thm3.1}
	Let $f$ be a holomorphic curve in $\mathbb{P}^n({\mathbb{C}})$ ($n>1$). Let ${\mathcal D}=\{D_1,\ldots, D_q\}$ be a family of slowly (with respect to $f$) moving hypersurfaces in $m$-subgeneral position, and let ${\mathcal Q}=\{Q_1,\ldots, Q_q\}$ be the set of defining homogeneous polynomials of ${\mathcal D}$ with $\deg Q_j=d_j$ and $Q_j(\fa)\not\equiv 0$ for $j=1,\ldots,q$. Assume that $f$ is $\ell$-nondegenerate over $\mathcal{K}_{\mathcal{Q}}$. Then, for any $\varepsilon>0$,
\begin{eqnarray*}
\|\sum_{j=1}^q \frac{1}{d_j}m_f(r, D_j) \leq (m-\ell+1)(\ell+1+\varepsilon)T_f(r).
\end{eqnarray*}
\end{theorem}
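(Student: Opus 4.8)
The plan is to observe that this theorem is essentially a repackaging of the computation already carried out in the proof of the Main Theorem, but with the dimension $\ell$ now fixed in advance by the $\ell$-nondegeneracy hypothesis rather than emerging from the dichotomy ``algebraically nondegenerate vs.\ algebraically degenerate.'' First I would recall that, by Definition \ref{defi3.2}, $f$ being $\ell$-nondegenerate over $\mathcal{K}_{\mathcal{Q}}$ means exactly that the variety $V\subset\mathbb{P}^n(\Omega)$ built from the ideal $I_{\mathcal{K}_{\mathcal{Q}}}$ of all homogeneous $P$ with $P(f_0,\ldots,f_n)\equiv 0$ has $\dim V=\ell$. This is precisely the situation analyzed in the ``algebraically degenerate'' branch of the Main Theorem's proof (where one only knew $0<\ell\le n$); nothing there used a specific value of $\ell$, so every intermediate estimate goes through verbatim.

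Concretely, I would reuse, in order: Lemma \ref{lemma3.1} and the renumbering argument to obtain \eqref{eq2.4}; Lemma \ref{lemma3.2} for the Hilbert-polynomial asymptotics of $W_N$; Lemma \ref{lemma3.3} to pass from the abstract base field to fibers $V(a)$; Lemma \ref{lemma3.5} and \eqref{eqNN-1} to produce the auxiliary polynomials $\widetilde{P}_{1(z)},\ldots,\widetilde{P}_{(\ell+1)(z)}$ in general position on $V$, yielding \eqref{eq2.5-0}; then the Corvaja--Zannier-type filtration (Definition \ref{def3.1}, Lemmas \ref{lemma3.6}--\ref{lemma3.9}) to obtain \eqref{eq2.12}; and finally Cartan's second main theorem for the linearly nondegenerate map $F$ (Theorem A4.2.1 in \cite{13}) combined with $T_F(r)=NT_f(r)+o(T_f(r))$ from \eqref{eq2.5}. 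This chain of inequalities is exactly \eqref{eq2.13}, and then \eqref{eq2.14}, which reads
$$\|\ \frac{1}{d}\sum_{j=1}^q m_f(r,D_j)\le (m-\ell+1)(\ell+1+\varepsilon)T_f(r).$$
Undoing the normalization $Q_j\mapsto Q_j^{d/d_j}$ made at the start (which replaces $\frac1d\sum m_f(r,D_j)$ by $\sum \frac1{d_j}m_f(r,D_j)$ since $\lambda_{Q_j^{d/d_j}}=\frac{d}{d_j}\lambda_{Q_j}$) gives the stated inequality.

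The only point requiring genuine care—and the step I expect to be the main obstacle—is checking that the $\ell$-nondegeneracy hypothesis alone is enough to run the filtration machinery, i.e.\ that one still has $\ell>0$ and that $f$ satisfies the key property ``there is no $P\in\mathcal{K}_{\mathcal{Q}}[x_0,\ldots,x_n]\setminus I_{\mathcal{K}_{\mathcal{Q}}}$ with $P(f_0,\ldots,f_n)\equiv 0$'', so that the map $F$ is linearly nondegenerate over $\mathcal{K}_{\mathcal{Q}}$. Both are immediate: $\ell>0$ is part of the hypothesis $0<\ell\le n$ in Definition \ref{defi3.2} (and is anyway forced, via Valiron's estimate, as in the Main Theorem's proof, since $f$ is nonconstant), and the displayed property holds by the very construction of $I_{\mathcal{K}_{\mathcal{Q}}}$ as the ideal of \emph{all} such $P$. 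Hence the proof is a direct transcription of the degenerate case of the Main Theorem's argument with $\dim V=\ell$ fixed throughout, and I would present it simply by citing \eqref{eq2.14}.
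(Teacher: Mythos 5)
Your proposal is correct and matches the paper's own approach: the paper derives Theorem~\ref{thm3.1} simply by pointing to inequality \eqref{eq2.14}, which was established in the degenerate branch of the Main Theorem's proof for arbitrary $\ell=\dim V$ and, after undoing the degree normalization $Q_j\mapsto Q_j^{d/d_j}$, is exactly the stated inequality. Your check that the $\ell$-nondegeneracy hypothesis supplies both $\ell>0$ and the linear nondegeneracy of $F$ over $\mathcal{K}_{\mathcal{Q}}$ --- the only two places where the Main Theorem's argument implicitly used more than $\dim V=\ell$ --- is accurate and completes the reduction.
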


We remark that Theorem \ref{thm3.1} is a generalization of Theorem 1.1 in \cite{15} for moving targets.


\begin{thebibliography}{99}


\bibitem{4} Cartan H. Sur les zeros des combinaisions linearires
de $p$ fonctions holomorpes donnees.  Mathematica(Cluj),
 7 (1933), 80--103


\bibitem{11} Corvaja, Pietro; Zannier, Umberto. On a general Thue's equation. Amer. J. Math. 126 (2004), no. 5, 1033--1055. MR2089081

\bibitem{9} Dethloff, Gerd; Tran, Van Tan. A second main theorem for moving hypersurface targets. Houston J. Math. 37 (2011), no. 1, 79--111. MR2786547

\bibitem{9-1} Dethloff, Gerd; Tran, Van Tan. Holomorphic curves into algebraic varieties intersecting
moving hypersurfaces targets. preprint.

\bibitem{7} Eremenko, A. E.; Sodin, M. L. Distribution of values of meromorphic functions and meromorphic curves from the standpoint of potential theory. (Russian)  St. Petersburg Math. J. 3 (1992), no. 1, 109--136 MR1120844

\bibitem{10-2} Matsumura, Hideyuki. Commutative algebra. Second edtion. Mathematics Lecture Note Series, 56. Benjamin/Cummings Publishing Co., Inc., Reading, Mass., 1980. xv+313pp. ISBN: 0-8053-7026-9 MR0575344

\bibitem{1} Nevanlinna, Rolf. Le th\'{e}or\`{e}me de Picard-Borel et la th\'{e}orie des fonctions m\'{e}romorphes.
Gauthier-Villars, Paris, 1929.

\bibitem{5} Nochka, E. I. On the theory of meromorphic curves. (Russian) Dokl. Akad. Nauk SSSR 269 (1983), no. 3, 547--552. MR0701289

\bibitem{2} Osgood, Charles F. Sometimes effective Thue-Siegel-Roth-Schmidt-Nevanlinna bounds, or better. J. Number Theory 21 (1985), no. 3, 347--389. MR0814011

\bibitem{13} Ru, Min. Nevanlinna theory and its relation to Diophantine approximation. World Scientific Publishing Co., Inc., River Edge, NJ, 2001. xiv+323 pp. ISBN: 981-02-4402-9. MR1850002

\bibitem{12} Ru, Min. A defect relation for holomorphic curves intersecting hypersurfaces. Amer. J. Math. 126 (2004), no. 1, 215-226. MR2033568

\bibitem{6} Ru, Min; Stoll, Wilhelm. The Cartan conjecture for moving targets. Several complex variables and complex geometry, Part 2 (Santa Cruz, CA, 1989), 477--508, Proc. Sympos. Pure Math., 52, Part 2, Amer. Math. Soc., Providence, RI, 1991. MR1128565

\bibitem{15} Si, Duc Quang. Degeneracy second main theorems for meromorphic mappings into projective varieties with hypersurfaces. to appear in Trans. Amer. Math. Soc.,  arXiv:1610.03951 [math.CV].

\bibitem{8} Si, Duc Quang. Second main theorem for meromorphic mappings with moving hypersurfaces in subgeneral position. arXiv:1610.08456 [math.CV].



\bibitem{3} Steinmetz, Norbert. Eine Verallgemeinerung des zweiten Nevanlinnaschen Hauptsatzes. (German) [A generalization of Nevanlinna's second main theorem]. J. Reine Angew. Math. 368 (1986), 134--141.

\bibitem{10-4} Valiron, Georges. Sur la d\'{e}riv\'{e}e des fonctions alg\'{e}bro\"{\i}des. (French) Bull. Soc. Math. France 59 (1931), 17--39. MR1504970

\bibitem{10-3} van der Waerden, B. L. Algebra. Vol. II. Based in part on lectures by E. Artin and E. Noether. Translated from the fifth German edition by John R. Schulenberger. Springer-Verlag, New York, 1991. {\rm xii}+284 pp. ISBN: 0-387-97425-3  MR1080173

\bibitem{We46} Weil, Andrew. Foundations of algebraic geometry. Vol. 29. American Mathematical Soc., 1946.

\bibitem{10} Zariski, Oscar. Generalized weight properties of the resultant of $n+1$ polynomials in $n$ indeterminates. Trans. Amer. Math. Soc. 41 (1937), no. 2, 249--265. MR1501900

\bibitem{ZS60} Zariski, Oscar; Samuel, Pierre. Commutative algebra. Vol. II. The University Series in Higher Mathematics. D. Van Nostrand Co., Inc., Princeton, N. J.-Toronto-London-New York 1960.



\end{thebibliography}
\end{document}